\def\@settitle{\begin{center}%
	\baselineskip14\p@\relax
	\bfseries
	\@title
	\end{center}%
}
\theoremstyle{theorem}
\newtheorem{theo}{Theorem}[section]
\newtheorem{prop}[theo]{Proposition}
\newtheorem{problem}[theo]{Problem}
\newtheorem{conj}[theo]{Hypothesis}
\newtheorem{conclusion}[theo]{Conclusion}
\theoremstyle{definition}
\newtheorem{defi}[theo]{Definition}
\newtheorem{example}[theo]{Example}
\theoremstyle{remark}
\newtheorem{remark}[theo]{Remark}
\newcommand*\circled[1]{\tikz[baseline=(char.base)]{\node[shape=circle,draw,inner sep=2pt] (char) {#1};}}
		\g@addto@macro\Gin@extensions{,.eps}
\title[]{Sharygin triangles and elliptic curves}
\author{
	I.\,V.\,Netay,
	A.\,V.\,Savvateev%
}
\thanks{The research of I.\,V.\,Netay (sections~3,4, appendix~A) was carried out at the IITP RAS at the expense of the Russian Foundation for Sciences (project № 14-50-00150).}
\thanks{A.\,V.\,Savvateev wishes to acknowledge the support of the Ministry of Education and Science of the Russian Federation, grant No. 14.U04.31.0002, administered through the NES CSDSI}
\address{Institute for Information Transmission Problems, RAS}
\address{National Research University Higher School of Economics, Russian Federation}
\address{New Economic School, Moscow Institute of Physics and Technology, and Dmitry Pozharsky University}
\begin{document}
\maketitle


\begin{abstract}
	The paper is devoted to the description of family of scalene triangles for which the triangle formed by the intersection points of bisectors with opposite sides is isosceles.
	We call them Sharygin triangles.
	It turns out that they are parametrized by an open subset of an elliptic curve.
	Also we prove that there are infinitely many non-similar integer Sharygin triangles.
\end{abstract}

\newcounter{p}
\newcounter{d}

\section{Introduction}
	The following problem had been stated in the Kvant journal~(see\cite[page~36]{Kv83}) and had been included into the famous problem book on planimetry~(see~\cite[page 55, problem 158]{Sh}).
	\begin{problem}
		It is known that, for a given triangle, the points where the bisectors meet opposite sides form an isosceles triangle.
		Does it imply that the given triangle is isosceles?
	\end{problem}
	The answer is negative.
	Sharygin writes:
	{\it "Unfortunately, the author had not constructed any explicit example of such a triangle (had not provided a triple of side lengths or a triple of angles) with so exotic property.
	Maybe, the readers can construct an explicit example."}

	For a given triangle, we call the triangle formed by the intersection points of the bisectors with opposite sides the {\it bisectral triangle} (on the~Figure~\ref{fig:triangle1} triangle $A'B'C'$ is bisectral for~$ABC$).

	\begin{defi}
		We call a triangle a {\it Sharygin triangle} if it is scalene but its bisectral triangle is isosceles.
	\end{defi}

	This work is completely devoted to the detailed study of Sharygin triangles.

	A great enthusiast of school mathematical contests Sergei Markelov told us that, amazingly, a Sharygin triangle can be constructed if we take a side of the right heptagon and two different adjacent diagonals (see the proof in Section~\ref{sec:ex}).

	It turns out that any Sharygin triangle has an obtuse angle (it is proved in~\cite{Sh}).
	Moreover, if~$x$ denotes its cosine, then~$-1<4x<\sqrt{17}-5$.
	This implies that the angle measure is between ${\approx} 102.663^\circ$ and~${\approx} 104.478^\circ$.
	In the example arising from the right heptagon we get the obtuse angle~$\frac{8\pi}{7} \approx 102.857^\circ$.
	In respect that the range of suitable angles is very small, this example is totally amazing and surprising.
	Consequently, the following question arises naturally: are there other examples of right polygons such that three of its vertices form a Sharygin triangle?
	Study of this problem has led us to some beautiful formulas, but has not led yet to new examples.

	Of course, an integer Sharygin triangle would be a kind of "triumph" in the problem to construct explicit examples of Sharygin triangles.

	To solve this problem S.\,Markelov started a computation running over all triangles with side lengths not exceeding million.
	No such luck, after two months (in~1996 or around) the computer answered that there are no examples.
	Nevertheless, Sergei had not calmed down.
	Evidently, something suggested him the right answer.

	Consider a triangle~$ABC$.
	Let us denote by $AA'$, $BB'$, and $CC'$ its bisectors and by \par
	$
		\hfill\hfill a=BC, \hfill b=AC, \hfill c=AB \hfill\hfill
	$ \par\noindent
	its side lengths (see~Figure~\ref{fig:triangle1}).
	Sergei considered the replacement
	\[
		\begin{cases}
			a = y+z,\\
			b = x+z,\\
			c = x+y.
		\end{cases}
	\]
	This replacement is well known in planimetry.
	For $a,b,c$ being the side lengths of a triangle, $x,y,z$ are distances from vertices to the points where the incircle meets the adjacent sides~(see~Figure~\ref{fig:triangle2}).

	\begin{figure}
		\centering
		\begin{subfigure}[b]{.45\textwidth}
			\includegraphics[width=\textwidth]{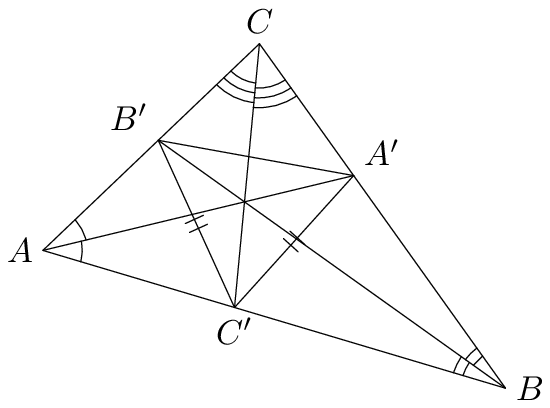}
			\caption{}
			\label{fig:triangle1}
		\end{subfigure}
		~
		\begin{subfigure}[b]{.45\textwidth}
			\includegraphics[width=\textwidth]{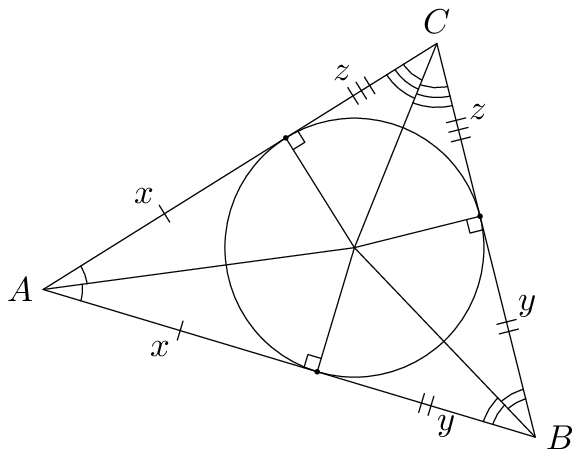}
			\caption{}
			\label{fig:triangle2}
		\end{subfigure}
		\caption{}
		\label{fig:tr}
	\end{figure}

	Sergei had rewritten the equation in terms of~$x,y,z$:
	\[
		4z^3 + 6xyz - 3xy(x+y) + 5z(x^2+y^2) + 9z^2(x+y) = 0.
	\]
	It is enough that~$x,y,z>0$ for~$a,b,c$ to satisfy the triangle inequalities.
	We see that the condition~$A'C'=B'C'$ becomes a cubic equation on~$x,y,z$ without the monomials~$x^3$ and~$y^3$.
	This means that the corresponding projective curve~$\mathcal{E}$ on the projective plane with coordinates~$(x:y:z)$ contains the points~$(1:0:0)$ and~$(0:1:0)$.
	Sergei divided the equation by~$z^3$ passing to the affine chart~$\{z\ne 0\}$ with the coordinates~$\widetilde{x}=\frac{x}{z}$ and~$\widetilde{y}=\frac{y}{z}$, and guessed that the point $(\widetilde{x},\widetilde{y})=(1,-3)$ lies on the curve.

	The equation of the curve~$\mathcal{E}$ is quadratic in~$\widetilde{x}$ and~$\widetilde{y}$.
	On the next step Sergei reopened the addition law of the points on an elliptic curve.
	Namely, he guessed a rational point and started to draw vertical and horizontal lines through the points that are already constructed.
	If we have a point~$Q$ on~$\mathcal{E}$, then the vertical line intersects the curve in two points (actually, in three points, where the third point is on infinity).
	The equation on these two points has rational coefficients, and we know one solution that is rational.
	Therefore, the other one is also rational.
	If we make after the same construction with the horizontal line through the new point and obtain the point~$Q'$, then this construction of~$Q'$ from~$Q$ corresponds to the addition of some point~$A$ to~$Q$ on the curve~$\mathcal{E}$.
	(We need to define the origin of our elliptic curve to determine the point~$A=Q'-Q$ that does not depend on the choice of~$Q$.)

	Iterating this algorithm (passing from~$Q$ to~$Q+A$), Sergei obtained on the fourth step a triangle, i.\,e.\,a~point with~$\widetilde{x},\widetilde{y}>0$.
	Replacing the coordinates back to~$a,b,c$, Sergei obtained $(a,b,c)=(18800081, 1481089, 19214131)$.
	It is not surprising that the computer program running over values before million gives nothing, while the presumably first integer triangle has so grandiose side lengths!
	Sergei would have found this triangle, if he started the program running the side lengths not exceeding billion on a more powerful computer some time after.
	Fortunately, the modern computers are not so powerful.
	This led us to the addition law on the elliptic curve and to some more complicated theory.
	As a result, we have proved that there are infinitely many non-similar integer Sharygin triangles.
	In this way, the school-level problem has led us to a beautiful branch of modern mathematics.

	In this work we consider the question of how to construct each integer Sharygin triangle.
	Integer Sharygin triangles correspond to rational points of the elliptic curve~$\mathcal{E}$ lying in some open subset (defined by the triangle inequalities).
	Therefore, we need to describe the group of rational points on the curve.
	Here we find the torsion subgroup, find the rank and give an element of the free part (it seems to be a generator).
	If this point of infinite order is a generator, then all the integer Sharygin triangles can be constructed from the points that we have found, by the addition of points on~$\mathcal{E}$.

	The authors are grateful to Sergei Markelov for introducing the problem and a series of useful tricks in the first rough solution.
	Also the authors are grateful to V.\,A.\,Voronov, N.\,Tsoy, D.\,V.\,Osipov, S.\,S.\,Galkin, I.\,A.\,Cheltsov, S.\,O.\,Gorchinsky and other participants of the conference <<Magadan Algebraic Geometry>> for useful discussions and advices.
	We are grateful to N.\,N.\,Osipov for the proof of Proposition~\ref{ordA=inf simple}.

	Lastly, we should explain elementary character of the exposition.
	The point was to introduce wide public~--- students, non-algebraic mathematicians and alike~--- to the magic of elliptic curves, starting from the absolutely clear, school-level problem.
	That is why we do not restrict ourselves to refer to standard but involved results from the arithmetics and algebraic aspects of elliptic curves, instead giving absolutely elementary proofs to most of our statements.
	We wish that students would find their speciality in studying elliptic curves, after reading this introductory text.

\section{Sharygin triangle arising from right heptagon}
	\label{sec:ex}
	\begin{example}
		\label{ex:7}
		Consider the unit circle~$|z|=1$ on the complex plane~$\mathbb{C}$.
		Set $\zeta = e^{\frac{2\pi i}{7}}$.
		Consider the triangle~$(1,\zeta,\zeta^3)$.
		Obviously, it is scalene.
		Its vertices are placed at the vertices of the right heptagon drawn by dot-and-dash line on~\cref{fig:7}.

		\begin{figure}[ht]
			\begin{center}
				\includegraphics{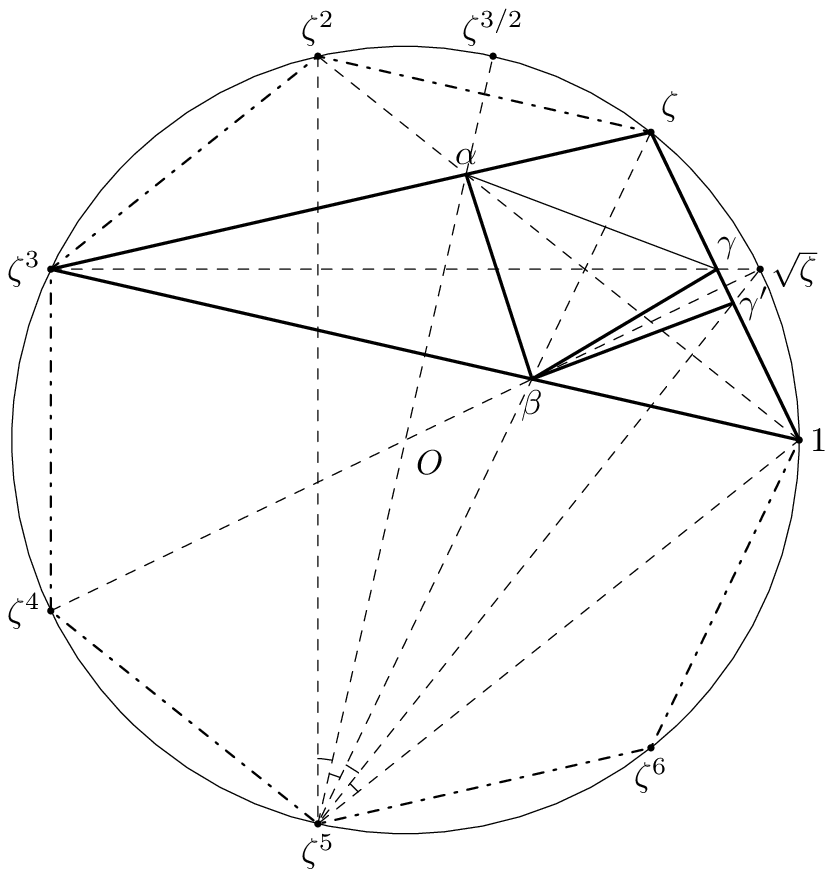}
			\end{center}
			\caption{}
			\label{fig:7}
		\end{figure}

		In Section~\ref{sec:param} we reduce the property of a triangle to be a Sharygin triangle to a cubic relation on its sides.
		Actually, it is enough to substitute side lengths to equation~\eqref{eq:C} to verify that $(1,\zeta,\zeta^3)$ is a Sharygin triangle.
		It is even simpler to substitute its angles~$\alpha=\dfrac{\pi}{7}$ and~$\beta=\dfrac{2\pi}{7}$ into the equivalent equation~\eqref{eq:Ctrig}.
		Let us, however, prove this fact geometrically.

		Denote the points of intersection of bisectors with the opposite sides of~$(1,\zeta,\zeta^3)$ by~$\alpha$, $\beta$ and~$\gamma$ as on the~\cref{fig:7}.
		Let~$\gamma'$ denote the reflection of~$\gamma$ from the line through~$\sqrt{\zeta}$ and~$\zeta^4$.
		Then $|\gamma-\beta|=|\gamma'-\beta|$.
		Lines $(1,\zeta^2)$ and $(\zeta,\zeta^3)$ are symmetric with respect to~$(\zeta^{3/2},\zeta^5)$.
		Therefore~$\alpha$ lies on~$(\zeta^{3/2},\zeta^5)$.
		Lines~$(\zeta^5,\zeta^{3/2})$ and~$(\zeta^5,\sqrt{\zeta})$ are symmetric with respect to~$(\zeta,\zeta^5)$.
		Lines~$(\zeta,\zeta^3)$ and~$(\zeta,1)$ are also symmetric with respect to~$(\zeta,\zeta^5)$.
		Therefore~$|\alpha-\beta|=|\gamma'-\beta|$.
		Finally,~$|\alpha-\beta|=|\gamma-\beta|$.

		Let us prove that the triangle~$(1,\zeta,\zeta^3)$ is not similar to a triangle with integer sides.
		Consider the ratio of two side lengths:
		\[
			\frac{|\zeta^3-\zeta|}{|\zeta-1|} = |\zeta+1| = 2\cos\frac{\pi}{7}.
		\]

		One can verify that the number~$2\cos\frac{\pi}{7}$ is a root of the irreducible polynomial~$z^3-z^2-2z+1$.
		Therefore it is irrational.
	\end{example}

	\begin{conj}
		\label{conj7}
		Suppose that vertices of Sharygin triangle coincide with vertices of some right~polygon with~$n$~sides.
		Then~$n$ is divisible by~$7$, and this triangle is similar to the one described above.
	\end{conj}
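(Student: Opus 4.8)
\emph{A proof proposal.}
The plan is to reduce the geometric statement to a Diophantine one about the angles, convert that into a relation among roots of unity, and then play the known bound on the obtuse angle against the rigidity of vanishing sums of roots of unity. First I would set up coordinates: inscribing the regular $n$-gon in a circle, any three of its vertices cut the circle into arcs of angular sizes $2\pi p/n$, $2\pi q/n$, $2\pi r/n$ with $p,q,r\in\mathbb{Z}_{>0}$ and $p+q+r=n$, so by the inscribed-angle theorem the triangle on these vertices has angles $\alpha=\pi p/n$, $\beta=\pi q/n$, $\gamma=\pi r/n$, and it is scalene precisely when $p,q,r$ are pairwise distinct. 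If $d=\gcd(p,q,r)$ then $d\mid n$, and the very same triangle, up to similarity, already arises from the regular $(n/d)$-gon via $(p/d,q/d,r/d)$; hence one may assume $\gcd(p,q,r)=1$, and then the assertion ``$n$ is divisible by $7$'' becomes ``$n=7$ and $\{p,q,r\}=\{1,2,4\}$.''

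Next I would pass to roots of unity. By the law of sines one may take $a:b:c=\sin\tfrac{\pi p}{n}:\sin\tfrac{\pi q}{n}:\sin\tfrac{\pi r}{n}$, and with $\zeta=e^{i\pi/n}$ --- a primitive $2n$-th root of unity satisfying $\zeta^{n}=-1$ --- each $\sin\tfrac{\pi k}{n}$ equals $\tfrac1{2i}\zeta^{-k}(\zeta^{2k}-1)$. Substituting this into the homogeneous cubic~\eqref{eq:C}, equivalently into~\eqref{eq:Ctrig}, and clearing the units and the negative powers of $\zeta$, one obtains a relation $P_{p,q,r}(\zeta)=0$ with $P_{p,q,r}\in\mathbb{Z}[x]$ of degree $O(n)$ but having only $O(1)$ nonzero coefficients of $O(1)$ size --- they are governed solely by the fixed shape of the cubic in $a,b,c$, each of $a,b,c$ contributing just a binomial in $\zeta$. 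Since $\zeta$ is primitive of order $2n$, its minimal polynomial $\Phi_{2n}$ must divide $P_{p,q,r}$ in $\mathbb{Z}[x]$; in particular $P_{p,q,r}(\zeta^{t})=0$ for \emph{every} $t$ coprime to $2n$, i.e.\ a whole Galois orbit of relations.

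Two sources of rigidity should now collapse the problem. First, the bound already proved in the paper: the obtuse angle of any Sharygin triangle has cosine in $\bigl(-\tfrac14,\tfrac{\sqrt{17}-5}{4}\bigr)$, so the obtuse one of $\pi p/n,\pi q/n,\pi r/n$, say $\gamma=\pi r/n$, satisfies $r/n\in(0.5703\ldots,0.5805\ldots)$. Second, writing $P_{p,q,r}(\zeta)=0$ out and using $\zeta^{n}=-1$ to absorb signs displays it as a vanishing sum of boundedly many powers of $\zeta$; by the Mann / Conway--Jones / Lam--Leung theory of vanishing sums of roots of unity, after removing primitive vanishing subsums every quotient of two of the roots of unity that occur is $\zeta$ raised to a bounded $\mathbb{Z}$-combination of $p,q,r,n$ and has order divisible only by primes below an absolute bound; since $\gcd(p,q,r)=1$, this forces all primes dividing $n$ to be bounded. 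Intersecting with the narrow window $r/n\approx 0.575$, and exploiting the Galois relations $\{P_{p,q,r}(\zeta^{t})=0\}$, should leave only a short explicit list of candidate $(n;p,q,r)$, each tested directly against~\eqref{eq:Ctrig}; only $(7;1,2,4)$ survives, which is the heptagon triangle of \cref{ex:7}, and hence every $n$ carrying a regular-polygon Sharygin triangle is a multiple of $7$ with the triangle similar to that one.

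The main obstacle --- and the reason this is stated only as a hypothesis --- is making that penultimate step effective and uniform in $n$. A crude count of the summands of $P_{p,q,r}$ yields a hopelessly generous prime bound, and the multiples of the admissible primes that still fall in the ratio window, with $\gcd(p,q,r)=1$, remain infinite in number, so the finiteness has to come from the \emph{equation}, not from the bounds alone. One must therefore wring real information out of the full Galois orbit $\{P_{p,q,r}(\zeta^{t})=0:\gcd(t,2n)=1\}$ and out of the precise coefficients of~\eqref{eq:C}, excluding every ``accidental'' primitive sub-relation among the relevant roots of unity for large $n$. Equivalently, after eliminating two of the three half-angle cotangents against the identity $\cot\tfrac\alpha2+\cot\tfrac\beta2+\cot\tfrac\gamma2=\cot\tfrac\alpha2\cot\tfrac\beta2\cot\tfrac\gamma2$, one wants to prove that the resulting one-variable integer polynomial has no root of unity among its roots besides the one attached to $n=7$ --- but controlling its factorization as $n$ grows is itself the unresolved difficulty.
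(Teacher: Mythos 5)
Be aware of the status of this statement in the paper: \cref{conj7} is a \emph{Hypothesis}, not a theorem. The authors carry out exactly the reduction you describe --- law of sines, substitution into \eqref{eq:C}, equivalently \eqref{eq:Ctrig}, passage to $x=e^{i\alpha}$, $y=e^{i\beta}$ --- and arrive at the root-of-unity system $1+x+y+x^2y^3+x^3y^2+x^3y^3=0$ with $x,y$ roots of unity of degree $2N$, which they then verify only numerically for $N\leqslant 2000$ and explicitly say they do not know how to prove. So the first half of your proposal reproduces the paper's own discussion, and, as you yourself concede in your last paragraph, what you add on top of it does not close the problem; your text is a programme, not a proof.

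Moreover, the one genuinely new step you lean on is overstated and, as written, fails. Mann/Conway--Jones/Lam--Leung bound the primes only for the \emph{minimal} (primitive) vanishing subsums of length at least $3$; a six-term vanishing sum of roots of unity can decompose entirely into length-two subsums $\zeta^{a}+\zeta^{a+n}=0$ (using $\zeta^{n}=-1$), and these impose only linear congruences among $p,q,r,n$, giving no bound at all on the primes dividing $n$. This is not a peripheral case: the heptagon solution itself vanishes in exactly this way --- for $(p,q,r)=(1,2,4)$ the six exponents $0,\,p,\,q,\,2p+3q,\,3p+2q,\,3p+3q$ are $0,1,2,8,7,9$ modulo $14$ and cancel as the pairs $(0,7)$, $(1,8)$, $(2,9)$ --- so the configuration you would have to rule out for large $n$ is precisely the configuration realized by the known solution. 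Hence ``all primes dividing $n$ are bounded'' does not follow, the set of candidates inside the angle window $r/n\in(0.5703\ldots,\,0.5805\ldots)$ stays infinite, and no finite list to test against \eqref{eq:Ctrig} is ever produced; the appeal to the full Galois orbit does not obviously rescue this, since the angle window and the triangle inequalities are not Galois-invariant. The honest content a proof would need --- a complete case analysis of the possible decompositions of the six terms into minimal vanishing subsums (three cancelling pairs, a pair plus a longer minimal sum, a single minimal six-term sum, \dots), with the resulting congruences in $p,q,r,n$ solved in each case --- is exactly what is missing here, and exactly where the paper leaves the question open.
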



	Let us give some ideas about~\cref{conj7}.
		From the law of sines we have
		\[
			a = 2\sin\alpha,\quad b=2\sin\beta,\quad c=2\sin\gamma,
		\]
		where $\alpha$, $\beta$ and~$\gamma$ are opposite to the sides $a$, $b$, $c$ angles.
		We will see below that the condition on~$a,b,c$ to form a Sharygin triangle is a cubic equation~\eqref{eq:C}.
		Substituting this to~\eqref{eq:C} with~$\gamma=\pi-\alpha-\beta$, we obtain
		\[
			\sin\alpha\sin\beta(\sin\alpha+\sin\beta+\sin(\alpha+\beta)-\sin(2\alpha+\beta)-\sin(\alpha+2\beta)-\sin(2\alpha+2\beta)) = 0.
		\]
		This can be easily checked by expansion of the brackets in both equations.
		Skipping $\sin\alpha\sin\beta$, we obtain the equation
		\begin{equation}
			\label{eq:Ctrig}
			\sin\alpha+\sin\beta+\sin(\alpha+\beta) = \sin(\alpha+2\beta) + \sin(2\alpha+\beta) + \sin(2\alpha+2\beta).
		\end{equation}

		Consider complex numbers~$x=\cos\alpha+i\sin\alpha$ and~$y=\cos\beta + i\sin\beta$.
		Here~$0<\alpha,\beta<\frac{\pi}{2}$.
		Equation~\eqref{eq:Ctrig} implies that the imaginary part of the number~$w=x+y+xy-xy^2-xy^2-x^2y^2$ is zero.
		This implies that this number coincides with its conjugate.
		Note that~$\bar{x}=x^{-1}$ and~$\bar{y}=y^{-1}$.
		Therefore
		\[
			w-\bar{w} = \frac{xy-1}{x^2y^2}(1+x+y+x^2y^3+x^3y^2+x^3y^3) = 0.
		\]

		Suppose that the vertices coincide with vertices of right $N$-gon.
		Therefore~$\alpha=\frac{m\pi}{N}$ and~$\beta=\frac{n\pi}{N}$ for some integer~$m,n$.
		Therefore~$x$ and~$y$ are roots of unity of degree~$2N$.
		So it is enough to solve the system
		\[
			\begin{cases}
				1 + x + y + x^2y^3 + x^3y^2 + x^3y^3 = 0,\\
				x^N = y^N = 1
			\end{cases}
		\]
		for some~$x,y\in\mathbb{C}$ such that~$x,y,xy$ have positive real and imaginary parts.
		Numerical computation shows that there are no solutions except primitive roots of degree~$7$ for~$N \leqslant 2000$.
		This suggests us to give~\cref{conj7}.
		But we don not know how to prove it.

\section{Parameterization by an open subset of elliptic curve}
	\label{sec:param}
	Take a triangle~$ABC$.
	Let~$A'B'C'$ be its bisectral triangle. Put \par
	\nopagebreak
	$
		\hfill\hfill a'=B'C',\hfill b'=A'C',\hfill c'=A'B'.\hfil\hfill
	$

	\begin{prop}
		Triples of side lengths of Sharygin triangles~$ABC$ are all triples~$(a,b,c)$ satisfying equation
		\begin{equation}
			\label{eq:C}
			q(a,b,c) = - c^3 - c^2(a+b) + c(a^2+ab+b^2) + (a^3 + a^2b + ab^2 + b^3) = 0
		\end{equation}
		and the triangle inequalities
		\[
			\begin{cases}
				0 < a < b + c,\\
				0 < b < a + c,\\
				0 < c < a + b.\\
			\end{cases}
		\]
	\end{prop}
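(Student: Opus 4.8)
The plan is to write the three side lengths of the bisectral triangle $A'B'C'$ as explicit rational functions of $a,b,c$, and then convert ``isosceles'' into a polynomial identity. First I would place the feet of the bisectors by the angle bisector theorem: $A'$ divides $BC$ in the ratio $BA':A'C=c:b$, and similarly for $B'$ on $CA$ and $C'$ on $AB$, so
$$
BA'=\tfrac{ac}{b+c},\ \ CA'=\tfrac{ab}{b+c},\qquad AB'=\tfrac{bc}{a+c},\ \ CB'=\tfrac{ab}{a+c},\qquad AC'=\tfrac{bc}{a+b},\ \ BC'=\tfrac{ac}{a+b}.
$$
Given any two of the three feet, the segments joining them to a common vertex of $ABC$ are now known, so the law of cosines in that corner triangle, with $\cos A=\tfrac{b^2+c^2-a^2}{2bc}$ and its cyclic analogues, yields closed forms such as
$$
B'C'^2=\frac{b^2c^2}{(a+c)^2}+\frac{b^2c^2}{(a+b)^2}-\frac{bc\,(b^2+c^2-a^2)}{(a+b)(a+c)},
$$
and $A'C'^2$, $A'B'^2$ by permuting the roles of the vertices.

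The heart of the argument is the identity
$$
(a+b)(a+c)^2(b+c)^2\bigl(A'C'^2-B'C'^2\bigr)=-abc\,(a-b)\,q(a,b,c)
$$
with $q$ the cubic of \eqref{eq:C}. Since $a,b,c>0$, it says $A'C'=B'C'$ iff $a=b$ or $q(a,b,c)=0$; hence for a scalene triangle (where $a\ne b$) the condition $A'C'=B'C'$ is equivalent to $q(a,b,c)=0$. Now the bisectral triangle of a scalene triangle is isosceles iff two of $A'B',B'C',C'A'$ coincide, and relabelling the vertices of $ABC$ merely permutes $a,b,c$ among themselves, so after relabelling we may take the coinciding pair to be $A'C'=B'C'$; thus every Sharygin triangle yields a triple with $q(a,b,c)=0$, the triangle inequalities being automatic. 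Conversely, if $(a,b,c)$ satisfies the triangle inequalities it is the side triple of a genuine triangle $ABC$, and $q(a,b,c)=0$ forces $A'C'=B'C'$, making $A'B'C'$ isosceles; if moreover $a,b,c$ are pairwise distinct this triangle is Sharygin.

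I expect the main obstacle to be establishing the boxed identity. I would first split off the factor $a-b$, using $a^2(a+c)^2-b^2(b+c)^2=(a-b)(a+b+c)\bigl(a^2+b^2+c(a+b)\bigr)$, $a^2-b^2=(a-b)(a+b)$, and the fact that the difference of the two ``cosine'' terms is again divisible by $a-b$; what remains is a homogeneous cubic cofactor, which one identifies with $q$ up to the explicit monomial factor either by comparing coefficients or, more cheaply, by noting that $q$ is irreducible (its zero set is the elliptic curve $\mathcal E$), so that any polynomial vanishing on the Zariski-dense set of scalene triangles with $A'C'=B'C'$ must be a multiple of $q$, the proportionality constant then being fixed by evaluation at one convenient point. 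A smaller subtlety lurks in the converse: $q(a,b,c)=0$ together with the triangle inequalities does not by itself force $a,b,c$ to be distinct --- the only exception being $a=b$, $c=\tfrac12(\sqrt{17}-1)\,a$, which is exactly the boundary value $4\cos C=\sqrt{17}-5$ recorded in the introduction --- so the scalene hypothesis genuinely has to be carried along.
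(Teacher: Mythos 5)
Your proposal follows essentially the same route as the paper: place the bisector feet by the angle-bisector theorem, compute the bisectral side lengths by the law of cosines, and factor the difference as $(a-b)$ times a manifestly positive factor times $q$; your identity for $A'C'^2-B'C'^2$ is the dimensionally correct (squared) form of the paper's displayed factorization of $a'-b'$, and your extra care about relabeling and about the isosceles solution $a=b$, $c=\tfrac{\sqrt{17}-1}{2}a$ satisfying \eqref{eq:C} with the triangle inequalities is sound. One caution: your ``cheaper'' identification of the cubic cofactor with $q$ via irreducibility and Zariski density is circular as phrased (it presupposes that the scalene triples with $A'C'=B'C'$ lie on $\{q=0\}$, which is what is being proved), so the direct coefficient comparison you list first is the route to take, and it is exactly what the paper does.
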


	\begin{figure}[ht]
		\begin{center}
			\includegraphics[scale=.7]{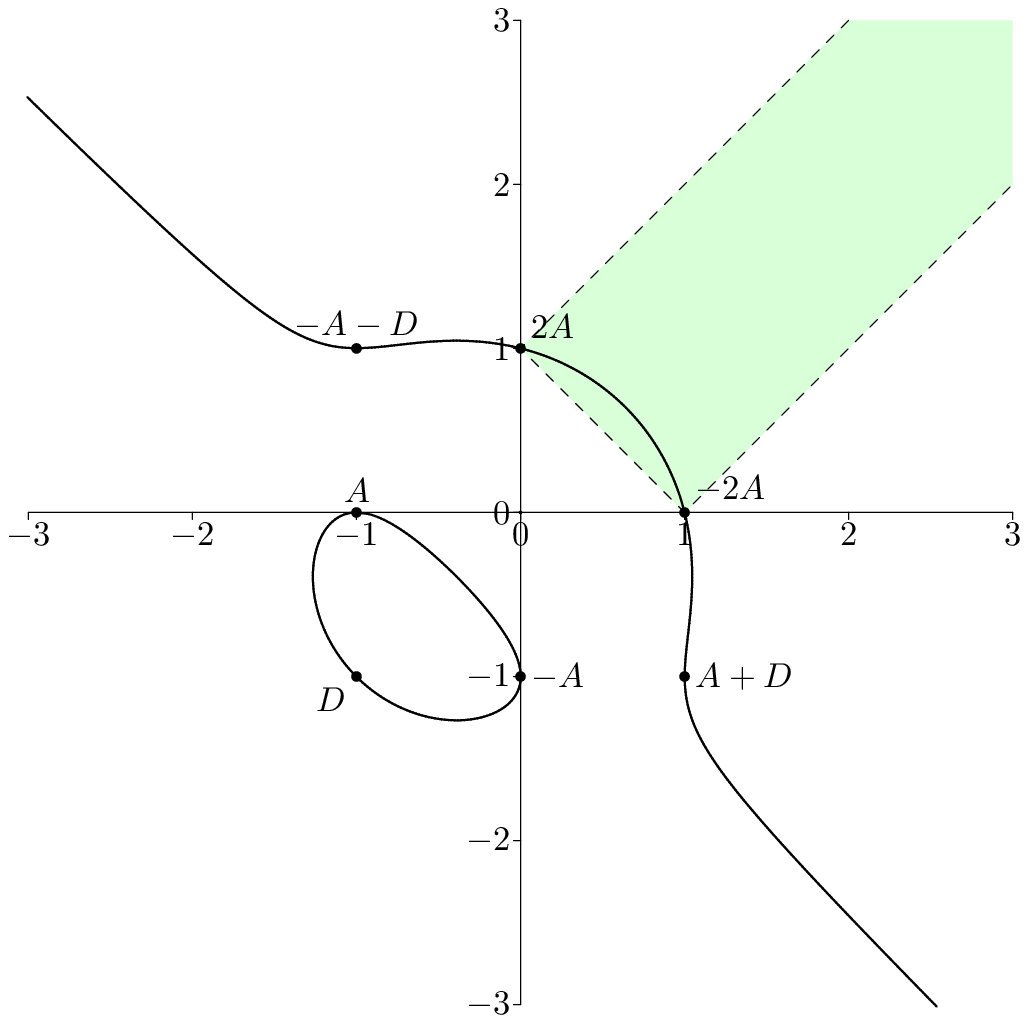}
		\end{center}
		\caption{}
		\label{pict:C}
	\end{figure}
	\begin{proof}
		The law of cosines and the property of bisectors allows us to express~$a',b',c'$.
		We obtain
		\[
			a'-b' = (a-b) \frac{abc}{(a+b)(a+c)^2(b+c)^2}(a^3+a^2b+ab^2+b^3+c(a^2+ab+b^2)-c^2(a+b)-c^3).
		\]
		The first multiplier~$(a-b)$ corresponds to isosceles triangles~$ABC$.
		The second multiplier~$\frac{abc}{(a+b)(a+c)^2(b+c)^2}$ is always positive.
		Therefore, if $a,b,c$ are pairwise different, then the equation~\eqref{eq:C} holds.
		Conversely, if~$a,b,c$ satisfy eq.\,\eqref{eq:C}, then~$a'=b'$.
	\end{proof}
	
	Let us denote by~$\mathcal{E}$ the curve~$q(a,b,c)=0$ in the projective plane with the coordinates~$(a:b:c)$.
	We see on the~\cref{pict:C} the curve~$\mathcal{E}$ in the affine plane~$c=1$ with coordinates~$(a/c,b/c)$ and the domain~$\mathcal{T}$ where the triangle inequalities hold:
	\[\left\{
		\begin{aligned}
			0 < a &< b + 1, \\
			0 < b &< a + 1, \\
			1 &< a + b.\\
		\end{aligned}
	\right.\]

	It is easy to see that the intersection is non-empty.
	For example, the point $(1,0)$ lies on~$\mathcal{E}$ and on the boundary of~$\mathcal{T}$.
	The tangent of~$\mathcal{E}$ at~$(1,0)$ has the equation~$x+y/4=1$.
	Therefore, there are infinitely many real points of~$\mathcal{E}$ in~$\mathcal{T}$.

	This proves that there are infinitely many scalene pairwise non-similar triangles with isosceles bisectral triangles.
	Therefore, we obtain that there are infinitely many Sharygin triangles with real side lengths.
	Below we consider integer Sharygin triangles.

\section{Integer triangles and rational points on elliptic curve}
	\label{sec:ell}
	\subsection{Smoothness and inflexion points}
		We have seen above that Sharygin triangles are parametrized by an open subset of a cubic curve~$\mathcal{E}$ defined by the equation~\eqref{eq:C}.
	\begin{prop}
		The curve $\mathcal{E}$ is an elliptic curve.
	\end{prop}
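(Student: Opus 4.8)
The plan is to show that the projective cubic $\mathcal{E}=\{q(a,b,c)=0\}$ is smooth, since a smooth plane cubic with a rational point is an elliptic curve; a rational point is already available, e.g. $(1:0:0)$, which lies on $\mathcal{E}$ because $q$ has no $a^3$ term (similarly $(0:1:0)$). So the entire task reduces to checking smoothness, i.e.\ that there is no point of the projective plane (over $\overline{\mathbb{Q}}$, or equivalently over $\mathbb{C}$) where $q$ and all three partial derivatives $q_a,q_b,q_c$ vanish simultaneously.

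First I would compute the three partials of
\[
q(a,b,c) = -c^3 - c^2(a+b) + c(a^2+ab+b^2) + (a^3 + a^2b + ab^2 + b^3).
\]
Explicitly $q_a = -c^2 + c(2a+b) + (3a^2+2ab+b^2)$, $q_b = -c^2 + c(a+2b) + (a^2+2ab+3b^2)$, and $q_c = -3c^2 - 2c(a+b) + (a^2+ab+b^2)$. By Euler's relation $3q = a\,q_a + b\,q_b + c\,q_c$, so on the locus where all three partials vanish $q$ vanishes automatically; hence it suffices to show the three partials have no common projective zero. I would first look at the line at infinity $c=0$: there $q_a=3a^2+2ab+b^2$ and $q_b=a^2+2ab+3b^2$, whose only common complex zero is $a=b=0$ (the quadratic forms $3a^2+2ab+b^2$ and $a^2+2ab+3b^2$ have discriminants $4-12<0$, so each is definite and vanishes only at the origin), which is not a projective point. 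So any singular point lies in the affine chart $c=1$.

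In the chart $c=1$ I would eliminate. Note $q_a - q_b = (2a+2ab+\cdots)$ — more precisely $q_a - q_b = a - b + (2a^2 - 2b^2) = (a-b)(1+2(a+b))$ after collecting terms, so at a singular point either $a=b$ or $a+b=-\tfrac12$. In the case $a=b$, substitute into $q_a=0$ and $q_c=0$ to get two polynomial equations in $a$ alone and check they have no common root; in the case $a+b=-\tfrac12$, substitute $b=-\tfrac12-a$ into $q_a=0$ and $q_c=0$ and again check the resulting one-variable polynomials are coprime (a resultant or gcd computation). I expect the main obstacle to be purely computational bookkeeping in this elimination — keeping the quadratic substitutions straight — rather than any conceptual difficulty; one should double-check that no spurious common root at infinity of the chart sneaks back in, but the $c=0$ analysis above already rules that out. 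Once both cases yield empty common-zero sets, $\mathcal{E}$ is smooth, hence (with the rational point $(1:0:0)$) an elliptic curve.
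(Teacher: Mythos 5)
Your approach is essentially the paper's: the paper's proof likewise reduces the claim to smoothness and asserts that the system $\frac{\partial q}{\partial a}=\frac{\partial q}{\partial b}=\frac{\partial q}{\partial c}=0$ has no nontrivial solutions (your use of Euler's relation, the chart-by-chart elimination, and the observation that $(1{:}0{:}0)$ supplies a rational point are just the details the paper leaves implicit). One small repair: on the line $c=0$ you argue that the forms $3a^2+2ab+b^2$ and $a^2+2ab+3b^2$ are definite, but definiteness only excludes \emph{real} common zeros, while smoothness must be checked over $\overline{\mathbb{Q}}$; you should instead note that the two binary quadratics have no common complex root (their root ratios $\frac{-1\pm i\sqrt2}{3}$ and $-1\pm i\sqrt2$ are distinct, equivalently their resultant is nonzero), after which your two affine cases $a=b$ and $a+b=-\tfrac12$ do indeed lead to coprime one-variable polynomials and the argument closes as you describe.
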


	\begin{proof}
		It is sufficient to check that the curve~$\mathcal{E}$ is smooth.
		The system~$\frac{\partial q}{\partial a}=\frac{\partial q}{\partial b}=\frac{\partial q}{\partial c}=0$ has no non-trivial solutions.
		This implies that the curve~$\mathcal{E}$ is smooth and therefore is an elliptic curve.
	\end{proof}

	The fact that an elliptic curve has~$9$ inflexion points is well known (see~\cite[Ch.\,IV, \S2, Ex.\,2.3.g, page\,305]{H77}).
	We want to find some inflexion point and consider it as the origin of the elliptic curve~$\mathcal{E}$, because in this case the addition law is simpler then for other choices of the origin.

	\begin{prop}
		The only inflexion point of~$\mathcal{E}$ defined over~$\mathbb{Q}$ is~$(1:-1:0)$.
	\end{prop}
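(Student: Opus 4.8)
The plan is to establish the statement in three moves: show that $(1:-1:0)$ really is an inflexion point, identify all inflexion points with the $3$-torsion of $\mathcal{E}$ for a suitable choice of origin, and then rule out nontrivial rational $3$-torsion.

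First I would check directly that $(1:-1:0)$ is an inflexion point. Substituting $a=1$, $b=-1$, $c=0$ into $q$ gives $0$, so the point lies on $\mathcal{E}$. Computing the gradient of $q$ at $(1,-1,0)$ yields $(2,2,1)$, so by Euler's relation the tangent line of $\mathcal{E}$ at this point is $2a+2b+c=0$. Substituting $c=-2(a+b)$ into $q$ and using $a^3+a^2b+ab^2+b^3=(a+b)(a^2+b^2)$ collapses everything to $q|_{\mathrm{tangent}}=3(a+b)^3$, which vanishes to order exactly $3$ at $a+b=0$, i.e.\ at $(1:-1:0)$. Hence the tangent meets $\mathcal{E}$ triply there, so the point is an inflexion point.

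Next, take $O:=(1:-1:0)$ as the origin of the group law on $\mathcal{E}$. Since $O$ is an inflexion point, three points of $\mathcal{E}$ are collinear (counted with multiplicity) if and only if they sum to $O$; therefore a point $P\in\mathcal{E}$ is an inflexion point exactly when its tangent meets $\mathcal{E}$ with multiplicity $3$ at $P$, i.e.\ exactly when $3P=O$. Thus the inflexion points defined over $\mathbb{Q}$ are precisely the $\mathbb{Q}$-rational points of order dividing $3$, and the proposition becomes the assertion that $\mathcal{E}$ has no rational point of order $3$.

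To see this, I would pass to a Weierstrass model of $(\mathcal{E},O)$, which exists over $\mathbb{Q}$ because $O$ is a rational inflexion point; moreover $\mathcal{E}$ carries the rational $2$-torsion point $(1:1:-1)$ (it lies on $\mathcal{E}$, and its tangent $a+b+2c=0$ passes through $O$), so the model can be chosen in the form $y^2=x^3+Ax^2+Bx$ with $A,B\in\mathbb{Q}$. In these coordinates the $x$-coordinates of the nonzero $3$-torsion points are the roots of the division polynomial $\psi_3(x)=3x^4+4Ax^3+6Bx^2-B^2$, and it remains to verify that no rational root $x_0$ of $\psi_3$ makes $x_0^3+Ax_0^2+Bx_0$ a nonzero square in $\mathbb{Q}$; this is a short elimination together with a square-class check. (Alternatively one may reduce $\mathcal{E}$ modulo a prime $p\neq 3$ of good reduction and observe that $3\nmid\#\mathcal{E}(\mathbb{F}_p)$, using that torsion injects under reduction; or simply invoke the determination of $\mathcal{E}(\mathbb{Q})_{\mathrm{tors}}$ carried out below.) Combining with the previous step, the only rational inflexion point is $O=(1:-1:0)$. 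The routine parts are the tangent computation and the passage to Weierstrass form; the only genuine obstacle is this last step, ruling out rational points of order $3$, since it needs the explicit coefficients $A,B$ (or an explicit good prime) rather than a formal argument.
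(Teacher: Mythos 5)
Your argument is correct, but it goes by a genuinely different route than the paper. The paper defines the inflexion points as the intersection of $\mathcal{E}$ with its Hessian cubic, verifies there are $9$ such points, and observes that the eight points other than $(1:-1:0)$ are defined only over the degree-$8$ extension of $\mathbb{Q}$ given by an explicit irreducible polynomial; so the whole content is one (computer-assisted) elimination. You instead (i) verify by hand that $(1:-1:0)$ is an inflexion point --- your tangent $2a+2b+c=0$ and the identity $q|_{\mathrm{tangent}}=3(a+b)^3$ are both correct --- (ii) use the standard fact that, with an inflexion point as origin, the inflexion points are exactly the points $P$ with $3P=O$, and (iii) reduce the proposition to the absence of rational $3$-torsion. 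This is more conceptual and ties the statement to the group structure; the paper's computation buys more explicit information (the field of definition of the other eight inflexion points), yours buys a proof that needs no Hessian and no factorization of a degree-$8$ resultant. The only soft spot is step (iii), which you leave as a sketch: the division-polynomial elimination for $y^2=x^3+5x^2-32x$ (so $\psi_3(x)=3x^4+20x^3-192x^2-1024$) is not carried out. But your fallbacks do close it without circularity: reduction modulo a good prime such as $p=5$ gives $\#\mathcal{E}(\mathbb{F}_5)=10$, which is prime to $3$, and torsion injects under good reduction; alternatively, the Nagell--Lutz determination of $\mathcal{E}(\mathbb{Q})_{\mathrm{tors}}\simeq\mathbb{Z}/2\mathbb{Z}$ in Proposition~\ref{pr:tor} uses only the Weierstrass model (whose point at infinity is $(1:-1:0)$, as the substitution $a=x+y$, $b=x-y$, $c=24-4x$ shows) and does not rely on the present proposition, so invoking it is legitimate even though it appears later in the text.
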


	\begin{proof}
		The inflexion points are defined as the intersection points of~$\mathcal{E}$ and its Hessian being also a smooth cubic.
		It can be easily verified that there are~$9$ intersection points and that the only point among them defined over~$\mathbb{Q}$ is~$(1:-1:0)$.
		All the others become defined over the extension of~$\mathbb{Q}$ by the irreducible polynomial
		\[
			32 + 115 t + 506 t^2 + 1053 t^3 + 1212 t^4 + 1053 t^5 + 506 t^6 + 115 t^7 + 32 t^8.
		\]
		Therefore, the proposition is proved.
	\end{proof}

	We take the point~$O:=(1:-1:0)$ as the identity element of the elliptic curve~$\mathcal{E}$.
	Then for a point~$A$ with coordinates~$(a:b:c)$ the point~$-A$ has the coordinates~$(b:a:c)$.
	Indeed, it is easy to check that these three points lie on one line and the equation is symmetric under the permutation~$a \leftrightarrow b$.

	\subsection{Torsion subgroup}
	At first, we need to find the Weierstra\ss{} form of~$\mathcal{E}$ to find its torsion subgroup.
	Under the change of coordinates
	\[
		\begin{aligned}
			a \mapsto x+y,\\
			b \mapsto x-y,\\
			c \mapsto 24-4x
		\end{aligned}
	\]
	the equation~$q=0$ transforms to
	\[
		y^2 = x^3 + 5x^2 - 32x.
	\]
	The discriminant equals~$\Delta=2506752 =2^{14}\cdot3^{2}\cdot{17}$.
	The set of points of finite order can be easily described using the following result.

	\begin{theo}[Nagell--Lutz, \cite{N,L}]
		Let~$\mathcal{E}$ be an elliptic curve $y^2=x^3+ax^2+bx+c$ with~$a,b,c\in\mathbb{Z}$.
		If a point~$(x,y)\ne\infty$ is a torsion point of~$\mathcal{E}(\mathbb{Q})$, then
		\begin{itemize}
			\item $x,y\in\mathbb{Z}$,
			\item either~$y=0$, or~$y$ divides~$\Delta=-4a^3c+a^2b^2+18abc-4b^3-27b^2$.
		\end{itemize}
	\end{theo}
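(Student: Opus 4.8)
The plan is to follow the classical two--step argument of Nagell and Lutz, carried out one prime at a time. Write $f(x)=x^3+ax^2+bx+c$, so that $\mathcal{E}$ is $y^2=f(x)$, the point at infinity $O$ is the origin of the group law, and $-P=(x,-y)$ for $P=(x,y)$. \textbf{Integrality.} Fix a prime $p$ with valuation $v_p$, and suppose a point $P=(x,y)\in\mathcal{E}(\mathbb{Q})$ has $v_p(x)<0$. Comparing valuations in $y^2=f(x)$ shows that the term $x^3$ strictly dominates the right-hand side, so $2v_p(y)=3v_p(x)$, whence $v_p(x)=-2\nu$ and $v_p(y)=-3\nu$ for a unique integer $\nu\geqslant 1$; in the local coordinates $t=x/y$, $s=1/y$ at $O$ (in which $\mathcal{E}$ reads $s=t^3+at^2s+bts^2+cs^3$, with integer coefficients) this is precisely $v_p(t)=\nu$. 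I would then introduce the sets $\mathcal{E}_\nu=\{P:v_p(t(P))\geqslant\nu\}\cup\{O\}$ and show, by rewriting the chord-and-tangent addition law in the coordinates $(t,s)$, that every $\mathcal{E}_\nu$ is a subgroup and that
\[
	t(P_1+P_2)\equiv t(P_1)+t(P_2)\pmod{p^{3\nu}}\qquad\text{for all }P_1,P_2\in\mathcal{E}_\nu.
\]
Granting this, if some torsion point had a non-integral $x$-coordinate, then for a suitable $p$ it would lie in $\mathcal{E}_1\setminus\{O\}$, and a suitable multiple of it would be a point $Q\in\mathcal{E}_1\setminus\{O\}$ of prime order $\ell$, say with $v_p(t(Q))=\nu\geqslant 1$. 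Iterating the congruence gives $0=t(\ell Q)\equiv\ell\, t(Q)\pmod{p^{3\nu}}$, i.e. $3\nu\leqslant v_p(\ell)+\nu$, which is impossible whether $\ell\neq p$ (it becomes $3\nu\leqslant\nu$) or $\ell=p$ (it becomes $3\nu\leqslant\nu+1$). Hence $v_p(x)\geqslant 0$ and $v_p(y)\geqslant 0$ for every $p$, so $x\in\mathbb{Z}$, and then $y^2=f(x)\in\mathbb{Z}$ forces $y\in\mathbb{Z}$.

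\textbf{The divisibility $y\mid\Delta$.} Now assume $y\neq 0$, so that $2P\neq O$; being torsion, $2P$ has integer coordinates by the previous step. Writing the duplication formula as
\[
	x(2P)+2x+a=\left(\frac{f'(x)}{2y}\right)^{2}
\]
and using that the left-hand side is an integer gives $4y^2\mid f'(x)^2$, hence $y\mid f'(x)$; and trivially $y\mid y^2=f(x)$. Finally I would invoke the resultant identity for the monic polynomial $f$: there exist $A(x),B(x)\in\mathbb{Z}[x]$ with $A(x)f(x)+B(x)f'(x)=\Delta$, the quantity $\Delta$ displayed in the statement being the discriminant of $f$ (nonzero, since $\mathcal{E}$ is smooth) and equal, up to sign, to the resultant of $f$ and $f'$. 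Evaluating at $x=x(P)$ and using $y\mid f(x)$ and $y\mid f'(x)$ yields $y\mid\Delta$.

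\textbf{Main obstacle.} The only step that is not a routine computation is the congruence $t(P_1+P_2)\equiv t(P_1)+t(P_2)\pmod{p^{3\nu}}$ on $\mathcal{E}_\nu$. Establishing it requires re-deriving the group law in the coordinates $(t,s)$ around $O$, checking that the rational functions that occur have $p$-integral coefficients (this is where the hypothesis $a,b,c\in\mathbb{Z}$ is used), and bookkeeping the $p$-adic valuations of the lower-order terms. Everything else --- the valuation comparison, the duplication formula, and the resultant identity --- is elementary.
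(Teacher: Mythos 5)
The paper itself gives no proof of this theorem: it is quoted as the classical Nagell--Lutz theorem with references to Nagell and Lutz, and is then only applied to compute the torsion subgroup of $\mathcal{E}(\mathbb{Q})$. So there is no in-paper argument to compare against; what you have written is the standard classical proof (Lutz's $p$-adic argument, as in Silverman--Tate), and as a plan it is correct. The valuation comparison giving $v_p(x)=-2\nu$, $v_p(y)=-3\nu$, the filtration $\mathcal{E}_\nu$ with the congruence $t(P_1+P_2)\equiv t(P_1)+t(P_2)\pmod{p^{3\nu}}$, the passage to a multiple of prime order inside $\mathcal{E}_1$, the duplication identity $x(2P)+2x+a=\bigl(f'(x)/(2y)\bigr)^2$ combined with integrality of $x(2P)$ (valid since $y\neq 0$ makes $2P\neq O$ a torsion point, hence integral by the first step), and the resultant identity $A(x)f(x)+B(x)f'(x)=\Delta$ with $A,B\in\mathbb{Z}[x]$ are exactly the ingredients of the classical proof, and the deductions you draw are sound; the small step from $4y^2\mid f'(x)^2$ to $y\mid f'(x)$ just uses that a rational number whose square is an integer is itself an integer. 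The only piece left unproved is the congruence lemma, which you correctly flag as the technical heart; it is established precisely along the lines you describe (group law in the $(t,s)$-chart near $O$, $p$-integrality of the coefficients, bookkeeping of valuations), so this is an acknowledged omission of a standard lemma rather than a gap in the logic. One remark on the statement itself: the displayed $\Delta$ contains a typo, its last term should be $-27c^2$ rather than $-27b^2$, so that $\Delta$ is the discriminant of $x^3+ax^2+bx+c$; your argument proves divisibility of $y$ by this discriminant, which is the intended assertion (and the stronger classical form even gives $y^2\mid\Delta$).
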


	\begin{prop}
		\label{pr:tor}
		The torsion subgroup of~$\mathcal{E}(\mathbb{Q})$ is isomorphic to~$\mathbb{Z}/2\mathbb{Z}$ and consists of the points~$O$ and~$(1:1:-1)$, which we denote hereafter by~$D$.
	\end{prop}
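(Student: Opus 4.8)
The plan is to exhibit the asserted subgroup by a direct computation and then rule out everything else using the Weierstra\ss{} model together with the Nagell--Lutz theorem just quoted. First I would check by hand that $D:=(1:1:-1)$ works: substituting into~\eqref{eq:C} gives $q(1,1,-1)=1-2-3+4=0$, so $D\in\mathcal{E}(\mathbb{Q})$; and since $-(a:b:c)=(b:a:c)$ we have $-D=D$, hence $2D=O$. As $D\neq O$, the subgroup $\{O,D\}\cong\mathbb{Z}/2\mathbb{Z}$ sits inside $\mathcal{E}(\mathbb{Q})_{\mathrm{tors}}$, and it remains only to prove that there are no further torsion points.

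Next I would pass to the Weierstra\ss{} coordinates, in which $\mathcal{E}$ becomes $y^2=x^3+5x^2-32x=x(x^2+5x-32)$. The rational points killed by $2$ correspond to the rational roots of this cubic; since $x^2+5x-32$ has discriminant $153=9\cdot 17$, which is not a perfect square, the only rational root is $x=0$. Thus $\mathcal{E}(\mathbb{Q})$ has a unique point of order $2$ (necessarily the image of $D$), so its $2$-primary torsion is cyclic. To eliminate the remaining possibilities I would invoke Nagell--Lutz: a torsion point $P=(x,y)\neq O$ has $x,y\in\mathbb{Z}$ with $y=0$ or $y$ dividing the discriminant; the sharper form $y^2\mid\Delta$, together with the factorisation $x(x^2+5x-32)=y^2$ (so $x\mid y^2$), cuts the candidates down to a short explicit list, and one checks that none of them satisfies the equation with $y\neq 0$. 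An alternative, perhaps cleaner, route is reduction modulo small primes of good reduction: a direct point count gives $\#\mathcal{E}(\mathbb{F}_5)=10$ and $\#\mathcal{E}(\mathbb{F}_{11})=12$, and since $\mathcal{E}(\mathbb{Q})_{\mathrm{tors}}$ injects into $\mathcal{E}(\mathbb{F}_p)$ for each such $p$, its order divides $\gcd(10,12)=2$. Either way one concludes $\mathcal{E}(\mathbb{Q})_{\mathrm{tors}}=\{O,D\}\cong\mathbb{Z}/2\mathbb{Z}$.

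The only genuine work is bookkeeping: making the finite Nagell--Lutz search (or the prime-by-prime count) visibly exhaustive. There is no conceptual obstacle once the Weierstra\ss{} form $y^2=x^3+5x^2-32x$ and its discriminant are in hand, which they already are from the preceding computation.
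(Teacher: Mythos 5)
Your proposal is correct, and its main line is the same as the paper's: pass to the Weierstra\ss{} model $y^2=x^3+5x^2-32x$ with $\Delta=2^{14}\cdot3^2\cdot17$ and run the finite Nagell--Lutz search, which the paper also does (and, like you, leaves the bookkeeping implicit). You add two useful refinements the paper does not spell out: the observation that the rational $2$-torsion is exactly $\{O,D\}$ because $x^2+5x-32$ has non-square discriminant $153=9\cdot17$, and the direct check $q(1,1,-1)=0$ with $-D=D$ via $-(a:b:c)=(b:a:c)$, which cleanly establishes the lower bound $\{O,D\}\subseteq\mathcal{E}(\mathbb{Q})_{\mathrm{tors}}$. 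Your alternative route is genuinely different and arguably tidier: the counts $\#\mathcal{E}(\mathbb{F}_5)=10$ and $\#\mathcal{E}(\mathbb{F}_{11})=12$ are correct, both are primes of good reduction, and $\gcd(10,12)=2$ kills everything beyond $\{O,D\}$ with no case-by-case search. The one point to make explicit there is that you need injectivity of $\mathcal{E}(\mathbb{Q})_{\mathrm{tors}}\to\mathcal{E}(\mathbb{F}_p)$ at odd primes of good reduction, which is a standard fact (e.g.\ via formal groups, see~\cite{Silv}) but strictly more than the mere existence of the reduction homomorphism that the paper cites from~\cite{RS}; with that reference supplied, the reduction argument is complete and avoids the divisor enumeration entirely, at the cost of invoking a slightly heavier standard theorem than Nagell--Lutz.
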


	\begin{proof}
		We can consider all the divisors~$y$ of~$\Delta$ and for each~$y$ find all integer solutions~$x$.
		It can be verified that the unique solution is~$(x,y)=(0,0)$.
		This point has~$(a:b:c)$-coordinates equal to~$(1:1:-1)$.
	\end{proof}

	Let us give some elementary proof of the fact that the point~$A=(1:0:-1)$ has infinite order.
	Applying the torsion group description, we can say that~$A\ne O, D$, but this proof uses the Nagell--Lutz Theorem.
	The proof below does not use it and is almost school-level.

	\begin{prop}
		\label{ordA=inf simple}
		The curve~$\mathcal{E}$ has infinitely many rational points.
		In particular, the point~$A=(1:0:-1)$ has infinite order.
	\end{prop}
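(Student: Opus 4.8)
The plan is to prove that $A=(1:0:-1)$ has infinite order; once this is known, the multiples $nA$, $n\in\mathbb{Z}$, are pairwise distinct rational points of $\mathcal{E}$, which gives both assertions. I would run the argument in the Weierstra\ss{} model $\mathcal{W}\colon y^2=x^3+5x^2-32x$ produced above, whose discriminant is $\Delta=2^{14}\cdot 3^2\cdot 17$, so $\mathcal{W}$ has good reduction at every prime $p\notin\{2,3,17\}$. Tracking the change of coordinates, $O=(1:-1:0)$ goes to the point at infinity, the $2$-torsion point $D=(1:1:-1)$ goes to $(0,0)$, and $A$ goes to the affine point $(-4,12)$ (note $(-4)^3+5(-4)^2-32(-4)=144=12^2$).

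Two facts about $A$ are immediate and I would record them first. We have $A\ne O$, since $O$ is the unique point of $\mathcal{E}$ whose last projective coordinate vanishes. And $2A\ne O$: by the formula $-P=(b:a:c)$ for $P=(a:b:c)$ observed above, $2A=O$ would force $A=-A$, i.e. $(1:0:-1)=(0:1:-1)$ in $\mathbb{P}^2$, which is false; equivalently, on $\mathcal{W}$ the point $(-4,12)$ has nonzero ordinate and hence is not $2$-torsion.

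The main step is a bound on torsion by reduction modulo small good primes. I would use the elementary fact that, for a prime $p\ne 2$ of good reduction, the reduction map $\mathcal{W}(\mathbb{Q})_{\mathrm{tors}}\to\widetilde{\mathcal{W}}(\mathbb{F}_p)$ is injective (equivalently, the kernel of reduction contains no nontrivial point of finite order). Counting the solutions of $y^2\equiv x^3+5x^2-32x$ over $\mathbb{F}_5$ and over $\mathbb{F}_{11}$ (both coprime to $\Delta$) gives $\#\widetilde{\mathcal{W}}(\mathbb{F}_5)=10$ and $\#\widetilde{\mathcal{W}}(\mathbb{F}_{11})=12$. Hence, if $A$ had finite order $n$, then $n$ would divide both $10$ and $12$, so $n\mid 2$, contradicting $A\ne O$ and $2A\ne O$. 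Therefore $A$ has infinite order.

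I expect the only real obstacle to be the injectivity of reduction on torsion at odd primes of good reduction; it has an elementary proof (the kernel of reduction is torsion-free), but for a fully elementary exposition it can be replaced by a concrete computation. Doubling $A=(-4,12)$ on $\mathcal{W}$ gives $2A=(4,-4)$, then $4A=(36,228)$, and then $8A$ has $x$-coordinate $\frac{27556}{3249}$ with $27556=2^2\cdot 83^2$ and $3249=3^2\cdot 19^2$. Since the reduced denominator of $x(8A)$ is divisible by $19^2$, the point $8A$ reduces to the identity modulo the odd good prime $19$ while being a genuine affine point; as a nonzero element of the torsion-free kernel of reduction at $19$ it has infinite order, and so does $A$.
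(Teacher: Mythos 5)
Your argument is correct, but it takes a genuinely different route from the paper's. You bound the torsion by reducing the Weierstra\ss{} model $\mathcal{W}\colon y^2=x^3+5x^2-32x$ modulo the good odd primes $5$ and $11$: the counts $\#\widetilde{\mathcal{W}}(\mathbb{F}_5)=10$ and $\#\widetilde{\mathcal{W}}(\mathbb{F}_{11})=12$ are correct, so any rational torsion point has order dividing $\gcd(10,12)=2$, while $A=(1:0:-1)\ne O$ and $A\ne -A=(0:1:-1)$ (equivalently, the ordinate of $(-4,\pm 12)$ is nonzero), so $A$ is not torsion; your identifications of $O$, $D$, $A$ and the doubling data $2A$, $4A$, $x(8A)=27556/3249$ all match the paper's appendix table up to an irrelevant sign of $y$. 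What your route buys is brevity plus the extra information that the rational torsion has exponent at most $2$, consistent with Proposition~\ref{pr:tor}. What it costs is the key lemma you quote but do not prove: injectivity of reduction on $\mathcal{W}(\mathbb{Q})_{\mathrm{tors}}$ at an odd prime of good reduction, equivalently torsion-freeness of the kernel of reduction. That lemma is standard, but it is of essentially the same depth as the Nagell--Lutz integrality statement, and the stated purpose of Proposition~\ref{ordA=inf simple} in the paper is to avoid exactly such input: the paper instead gives a self-contained computation with the duplication formula showing that the numerator of the $x$-coordinate strictly increases under repeated doubling starting from $6A$ with $x=121/16$, so the points obtained by successive duplication are pairwise distinct. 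Note also that your final ``fully elementary replacement'' is not actually a replacement: concluding that $8A$ has infinite order because it is a nonzero element of the torsion-free kernel of reduction at $19$ invokes the very same lemma, so to reach the paper's level of elementarity you would still have to prove that torsion points have $p$-integral coordinates at odd good primes, or else argue as the paper does.
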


	\begin{proof}
		Take a rational point~$P_0=(x_0,y_0)$ on the elliptic curve~$\mathcal{E}$ such that~$y_0\ne 0$.
		Consider its {\it duplication}, i.\,e. the point~$P_1=2P_0=(x_1,y_1)$.
		It can be constructed as the intersection point of the tangent line to~$\mathcal{E}$ at~$P_0$.
		It is easy to check that
		\[
			x_0 = \frac{(x_0^2+32)^2}{4x_0(x_0^2+5x_0-32)} \ne 0.
		\]
		Let $x_0=p_0/q_0$ and $x_1=p_0/q_0$ be the irreducible ratios.
		Then
		\[
			\frac{p_1}{q_1} = \frac{(p_0+32q_0^2)^2}{4p_0q_0(p_0+5p_0q_0-32q_0^2)}.
		\]
		Suppose that~$p_0$ is odd and positive (for example, $P_0=6A$ with~$x_0=p_0/q_0=\frac{121}{16}$).
		Then~$p_1$ is also odd and positive.
		It is easy to see that
		\begin{multline*}
			d = \operatorname{GCD}((p_0^2 + 32q_0^2)^2, 4p_0q_0(p_0^2 + 5p_0q_0 - 32q_0^2)) = \\ =
			\operatorname{GCD} ((p_0^2 + 32q_0^2)^2, p_0^2 + 5p_0q_0 - 32q_0^2) = \\ =
			\operatorname{GCD} (q_0^2(64q_0 - 5p_0)^2, p_0^2 + 5p_0q_0 - 32q_0^2) = \\ =
			\operatorname{GCD} (25p_0^2 - 640p_0q_0 + 4096q_0^2, p_0^2 + 5p_0q_0 - 32q_0^2) = \\ =
			\operatorname{GCD} (153(32q_0-5p_0), p_0^2 + 5p_0q_0 - 32q_0^2) = \\ =
			\operatorname{GCD} (153p_0^2, p_0^2+5p_0q_0-32q_0) \in\{1,3,9,17,51,153\},
		\end{multline*}
		because~$p_0^2$ and~$p_0^2+5p_0q_0-32q_0^2$ are coprime.
		We see that
		\[
			p_1 = \frac{(p_0+32q_0^2)^2}{d} \geqslant
			\frac{(p_0^2+32q_0^2)^2}{153} \geqslant
			\frac{(p_0^2+32)^2}{153} > p_0.
		\]
		Therefore, numerators of $x$ coordinates of points~$P_i$, where $P_{i+1}=2P_i$, increase (in particular, for~$P_0=6A$).
		We conclude that~$A$ has infinite order.
	\end{proof}

	\begin{theo}
		Rational points are dense on the curve $\mathcal{E}$.
		There are infinitely many pairwise non-similar integer Sharygin triangles.
	\end{theo}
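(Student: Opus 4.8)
The plan is to derive both assertions of the theorem from Proposition~\ref{ordA=inf simple}, which already provides infinitely many rational points, together with the description of the region~$\mathcal{T}$ from Section~\ref{sec:param}. The key point is that, up to similarity, integer Sharygin triangles are exactly the rational points of~$\mathcal{E}$ lying in the open set~$\mathcal{T}$: given a rational point $(a:b:c)$ of~$\mathcal{E}$ with $a/c,\,b/c>0$, clearing denominators and dividing by the common factor produces a triple of coprime positive integers which still satisfies the homogeneous equation~\eqref{eq:C} and, the triangle inequalities being scale invariant, still lies in~$\mathcal{T}$; conversely, distinct points of~$\mathbb{P}^2$ yield non-proportional, hence non-similar, triples. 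Thus the whole theorem reduces to producing infinitely many rational points of~$\mathcal{E}$ inside~$\mathcal{T}$, and this will follow from two facts: that $\mathcal{E}(\mathbb{Q})$ is dense in $\mathcal{E}(\mathbb{R})$ in the Euclidean topology, and that $\mathcal{E}\cap\mathcal{T}$ is a non-empty open subset of~$\mathcal{E}(\mathbb{R})$.

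For the density I would proceed as follows. With the group law $\mathcal{E}(\mathbb{R})$ is a compact topological group --- a closed subset of $\mathbb{P}^2(\mathbb{R})$ --- hence a disjoint union of one or two circles $\mathbb{R}/\mathbb{Z}$, and since the cubic $x^{3}+5x^{2}-32x$ has three real roots there are exactly two components. By Proposition~\ref{ordA=inf simple} the subgroup $\langle A\rangle$ is infinite, so its closure $H$ is an infinite closed subgroup; since a proper closed subgroup of a circle is finite, $H$ cannot meet the identity component $\mathcal{E}(\mathbb{R})^{0}$ in a finite set, and therefore $H\supseteq\mathcal{E}(\mathbb{R})^{0}$. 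A short look at the Weierstra\ss{} model shows that $A$ itself does not lie on $\mathcal{E}(\mathbb{R})^{0}$ (it lies on the bounded oval), so $H$ meets the other component as well, and $H=\mathcal{E}(\mathbb{R})$. Hence already $\langle A\rangle\subseteq\mathcal{E}(\mathbb{Q})$ is dense in $\mathcal{E}(\mathbb{R})$, which is the first statement of the theorem.

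That $\mathcal{E}\cap\mathcal{T}$ is open in $\mathcal{E}(\mathbb{R})$ is immediate, $\mathcal{T}$ being open in the chart $c=1$. That it is non-empty is the remark already made in Section~\ref{sec:param}: the point $(1:0:1)$ lies on $\mathcal{E}$ and on $\partial\mathcal{T}$, and the tangent line $x+y/4=1$ of $\mathcal{E}$ there points into~$\mathcal{T}$, so a whole sub-arc of $\mathcal{E}$ emanating from $(1:0:1)$ runs into the interior of~$\mathcal{T}$. A non-empty open subset of $\mathcal{E}(\mathbb{R})$ necessarily contains infinitely many points of the dense set $\langle A\rangle$; replacing each such rational point by its primitive integer triple as above yields infinitely many pairwise non-similar integer Sharygin triangles, completing the proof.

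Concerning the difficulty: the passage from projective points to integer triangles and the topological fact that an infinite closed subgroup of a circle is everything are routine. The one step that genuinely needs attention --- and the main potential pitfall --- is the matching of real components: because $\mathcal{E}(\mathbb{R})$ is disconnected, one must actually verify that the dense subgroup $\langle A\rangle$ (or at least some suitable multiple of $A$) reaches the component of $\mathcal{E}(\mathbb{R})$ carrying the arc $\mathcal{E}\cap\mathcal{T}$, and this forces one to examine the real geometry of $y^{2}=x^{3}+5x^{2}-32x$ rather than merely appealing to density in the abstract.
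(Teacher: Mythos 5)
Your argument is correct, but it follows a genuinely different route from the paper's. You first prove density of the cyclic group generated by $A$ in $\mathcal{E}(\mathbb{R})$ by a structural argument: the closure of $\langle A\rangle$ is an infinite closed subgroup of the compact group $\mathcal{E}(\mathbb{R})\simeq S^1\times\mathbb{Z}/2\mathbb{Z}$, hence contains the identity component, and since $A=(-4,-12)$ in the Weierstra\ss{} model lies on the bounded oval (its $x$-coordinate lies between the roots $\tfrac{-5-3\sqrt{17}}{2}$ and $0$), the closure is everything; you then intersect with the non-empty open arc $\mathcal{E}\cap\mathcal{T}$ furnished by the tangent-line remark at $(1:0:1)$ from Section~\ref{sec:param}, and pass to primitive integer triples. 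The paper instead first manufactures integer Sharygin triangles directly: it uses Bolzano--Weierstrass on the sphere model of the cone $q=0$ to get a limit point $L$ of $\{nA\}$, then exploits continuity of the group law via $f(X,Y)=M+X-Y$, where $M=9A+D$ is the explicitly known integer Sharygin point, to push rational points into an $\varepsilon$-neighborhood of $M$ inside $\mathcal{T}$; density is proved afterwards, using translation by the $2$-torsion point $D$ to reach the second real component rather than checking which component carries $A$. Your version is cleaner and needs neither the explicit point $M$ nor the limit-point transport, at the cost of invoking the classification of closed subgroups of the circle and the real component structure of $\mathcal{E}$ (the component check for $A$, which you correctly single out as the delicate step, is exactly what replaces the paper's use of $D$ and of $M$); the paper's construction stays closer to its stated elementary, compactness-only style. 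Two small points to tighten: justify that $H\cap\mathcal{E}(\mathbb{R})^{0}$ is infinite by noting that the part of $H$ on the other component, if non-empty, is a coset of $H\cap\mathcal{E}(\mathbb{R})^{0}$, so finiteness of the intersection would force $H$ finite; and, as in the paper, one should remark that rational points of $\mathcal{E}$ in $\mathcal{T}$ are automatically scalene, since the isosceles solutions $a=b$ on $\mathcal{E}$ within $\mathcal{T}$ have the irrational ratio $c/a=\tfrac{-1+\sqrt{17}}{2}$ and the cases $a=c$, $b=c$ force $q(a,b,c)=b(a+b)^2\ne 0$ there.
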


	\begin{proof}
	Consider the rational point~$A=(1:0:-1)$ on~$\mathcal{E}$.
	Since~$A\ne O,D$, its order is infinite.
	Firstly, the fact that~$\operatorname{ord}A=\infty$ was proved in other way.
	It was checked that points~$nA$ are pairwise different for~$n=1,\ldots,12$\footnote{Thanks to N.\,Tsoy for this calculation.}.
	From Mazur Theorem it follows that order of any torsion point of any elliptic curve does not exceed~$12$.
	Therefore,~$A$ is not a torsion point.

	Consider the equation~\eqref{eq:C} in three-dimensional affine space.
	It is homogeneous,~i.\,e. if a point~$(a,b,c)\ne 0$ is a solution, then any point of the line~$(\lambda a,\lambda b,\lambda c)$ for any~$\lambda$ is a solution.
	In other words, the set of solutions is a cone.
	Consider the unit sphere~$S=\{a^2+b^2+c^2=1\}$.
	The cone intersects it in a curve~$\widetilde{\mathcal{E}}$ that consists of three ovals (see~Figure~\ref{pict:cone}).
	Under the map of~$S$ into the projective plane~$\mathbb{P}^2=\{(a:b:c)\}$ points of projective plane correspond to pairs of opposite points on~$S$.
	Two of the ovals are opposite on the sphere, and another one is opposite to itself.
	
	\begin{figure}[ht]
		\begin{center}
			\includegraphics[scale=1]{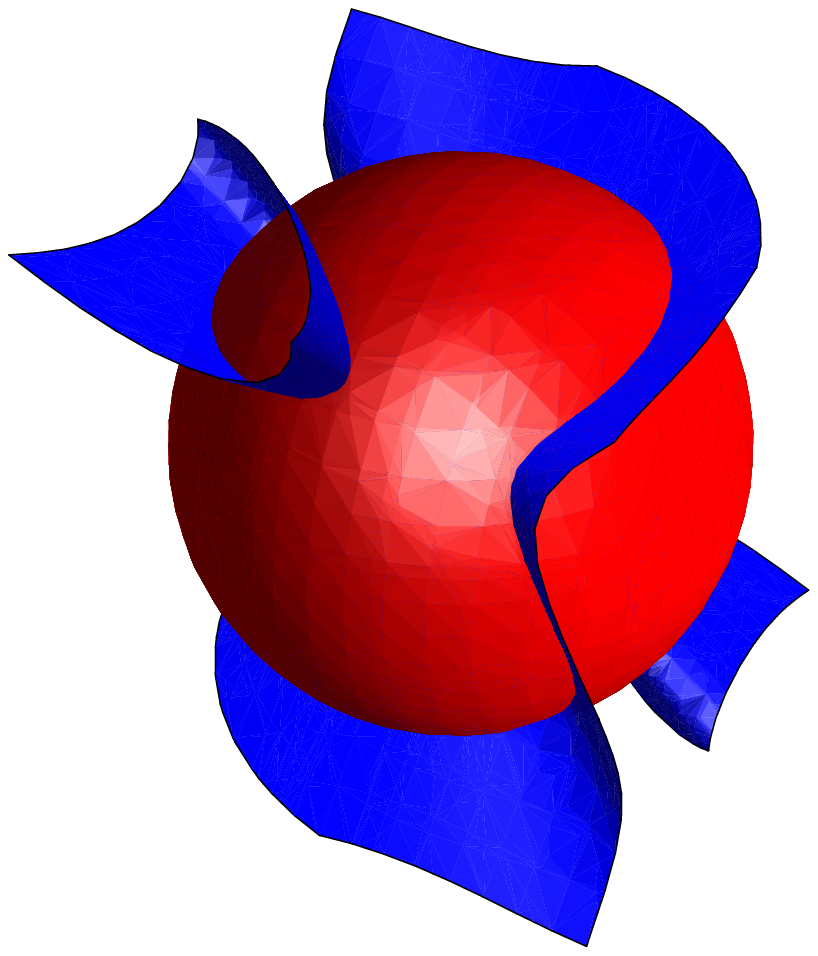}
		\end{center}
		\caption{}
		\label{pict:cone}
	\end{figure}

	For any point~$nA=(a_n:b_n:c_n)$ of~$\mathcal{E}$ denote by~$A_n$ one of two points of intersection of the line~$(\lambda a_n,\lambda b_n, \lambda c_n)$ and the sphere~$S$.
	Since all these lines are different and any pair of them intersects only at the origin, all the points~$A_n$ are pairwise different.
	Therefore,~$\{A_n\}$ is an infinite subset of the compact set~$S\subset\mathbb{R}^3$.
	From Bolzano--Weierstrass theorem it follows that there is a limit point~$\widetilde{L}$ of the set~$\{A_n\}$.
	The curve~$\widetilde{\mathcal{E}}$ is closed, and~$A_n\in\widetilde{\mathcal{E}}$ for any~$n$, therefore~$\widetilde{L}\in\widetilde{\mathcal{E}}$.
	The point~$\widetilde{L}\in\widetilde{\mathcal{E}}$ corresponds to a point~$L\in\mathcal{E}$ that is a limit point of the set~$nA$.

	It is easy to see that the addition~$(X,Y)\mapsto X+Y$ and inversion~$X\mapsto-X$ of points of~$\mathcal{E}$ are continuous operations.
	Denote by~$M$ the point~$(18800081: 1481089: 19214131)$ corresponding to the known Sharygin triangle.
	Let us introduce the function~$f(X,Y)=M+X-Y$ of points of the elliptic curve~$\mathcal{E}$.
	Obviously, it is continuous, and~$f(L,L)=M$.
	From the definition, for any~$\varepsilon>0$ exists~$\delta>0$ such that if~$X,Y\in O_\delta(L)$, then~$f(X,Y)\in O_\varepsilon(M)$.
	We can take such $\varepsilon>0$ that $O_\varepsilon\subset \mathcal{T}$, i.\,e. any point of~$\mathcal{E}\cap O_\varepsilon(M)$ corresponds to a Sharygin triangle.
	For the corresponding~$\delta>0$ there are~$n_\varepsilon A, m_\varepsilon A\in O_\delta(L)$.

	The point~$f(n_\varepsilon A,m_\varepsilon A)$
	\begin{itemize}
		\item corresponds to a Sharygin triangle,
		\item is rational.
	\end{itemize}
	Therefore, we obtain an integer Sharygin triangle in arbitrary small neighborhood of~$M$.
	So we get infinitely many integer Sharygin triangles.

	Now let us prove that rational points are dense on~$\mathcal{E}$.
	We see that topologically~$\mathcal{E}$ is a union of two circles.
	One can see that as a topological group $\mathcal{E}$ is a product~$S^1\oplus\mathbb{Z}/2\mathbb{Z}$, and any of two circles has a rational point (for example,~$O$ and~$D$).
	Therefore the set of points~$\{nA\}\sqcup\{nA+D\}$ consists of rational points and is dense in~$\mathcal{E}$.
	\end{proof}

	\subsection{Rank}
	The {\it rank} of an elliptic curve~$E$ over the field~$\mathbb{Q}$ is defined as the rank of it as an abelian group.
	Unlike the torsion subgroup, there is no known algorithm to calculate the rank of any elliptic curve.
	But some results are known and allow to find ranks of some curves.
	One of them is related to the Hasse--Weil function defined below.
	To give the definition, we need to introduce some notations.

	For the elliptic curve~$E$ defined as~$y^2=x^3+ax+b$ for~$a,b\in\mathbb{Q}$ we can change the variables~$x$ and~$y$ in such a way that the curve~$E$ would be defined as~$y^2=x^3+a'x+b'$ for~$a',b'\in\mathbb{Z}$.
	Indeed, if~$d$ is a common denominator of~$a$ and~$b$, then we can replace~$x\mapsto d^2x$ and~$y\mapsto d^3y$.

	Suppose that the elliptic curve~$E$ is defined as~$y^2=x^3+ax+b$ for~$a,b\in\mathbb{Z}$.
	Then we can consider its reduction modulo~$p$ for each prime number~$p$.
	If $p$ does not divide~$\Delta$, then the group homomorphism arises:
	\[
		E(\mathbb{Q}) \to E(\mathbb{F}_p).
	\]
	For the proof and details see~\cite{RS}.

	\begin{defi}
		\label{def:Np}
		For each prime number~$p$ not dividing~$\Delta$ denote by~$N_p$ the number of points on the curve~$E(\mathbb{F}_p)$,~i.\,e.~the number of pairs~$(x,y)$, where $0\leqslant x,y\leqslant p-1$, such that
		\[
			y^2\equiv x^3+ax+b \mod (p).
		\]
	\end{defi}

	\begin{defi}
		\label{def:HW}
		Define the {\it Hasse-Weil $L$-function} of~$E$, a function in complex variable~$s$, by
		\[
			L(E,s) =
			\prod_{p {\not\,|} \Delta}
			\left(
				1 - \frac{1+p-N_p}{p^s} + \frac{p}{p^{2s}}
			\right)^{-1}
			\times
			\prod_{p \mid \Delta}
			\ell_p(E,s)^{-1},
		\]
		where~$\ell_p(E,s)$ is a certain polynomial in~$p^{-s}$ such that $\ell_p(E,1)\ne 0$ (see~\cite[p.\,196]{Tate}).
	\end{defi}

	From the estimation
	\[
		p + 1 - 2\sqrt{p} \leqslant N_p \leqslant p + 1 + 2\sqrt{p}
	\]
	proved by Hasse (see~\cite{Hasse1,Hasse2}) it follows that~$L(E,s)$ converges absolutely and uniformly on compact subsets of the half-plane~$\{\operatorname{Re}(s)>3/2\}$.
	It was proved by Breuil, Conrad, Diamond, Taylor and Wiles (see\cite{W,TW,BCDT}) that~$L(E,s)$ has an analytic continuation to~$\mathbb{C}$.
	It turns out that the behavior of~$L(E,s)$ at~$s=1$ is related to the rank of~$E$ in the following way.
	Define the {\it analytical rank} $\operatorname{rk}_{an}(E)$ as the order of vanishing of~$L(E,s)$ at~$s=1$.

	\begin{theo}[see~\cite{GZ,Ko1,Ko2}]
		\begin{itemize}
			\item If $\operatorname{rk}_{an}(E)=0$, then~$\operatorname{rk}(E)=0$,
			\item If $\operatorname{rk}_{an}(E)=1$, then~$\operatorname{rk}(E)=1$.
		\end{itemize}
	\end{theo}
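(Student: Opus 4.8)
The plan is to deduce both implications from the theory of Heegner points, following Gross--Zagier~\cite{GZ} and Kolyvagin~\cite{Ko1,Ko2}; there is no elementary argument, so what follows is only a sketch of the mechanism. First I would invoke the modularity theorem of Wiles, Taylor--Wiles and Breuil--Conrad--Diamond--Taylor~\cite{W,TW,BCDT} (already used above to continue $L(E,s)$ to $\mathbb{C}$) to fix a non-constant morphism $\phi\colon X_0(N)\to E$ over $\mathbb{Q}$, where $N$ is the conductor of $E$. Next I would pick an imaginary quadratic field $K$ satisfying the \emph{Heegner hypothesis}, that is, such that every prime dividing $N$ splits in $K$; then the sign in the functional equation of $L(E/K,s)=L(E,s)\,L(E^K,s)$ is $-1$, so this $L$-function vanishes to odd order at $s=1$, and $\phi$ carries a suitable CM point of $X_0(N)$ to a Heegner point $y_K\in E(K)$. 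Using the nonvanishing theorems for quadratic twists (Waldspurger, Bump--Friedberg--Hoffstein, Murty--Murty) I would in addition arrange $\operatorname{rk}_{an}(E^K)=0$ when $\operatorname{rk}_{an}(E)=1$, and $\operatorname{rk}_{an}(E^K)=1$ when $\operatorname{rk}_{an}(E)=0$; in both situations $\operatorname{ord}_{s=1}L(E/K,s)=1$.

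The proof would then rest on two deep inputs. The first is the \emph{Gross--Zagier formula}: the canonical (N\'{e}ron--Tate) height of the Heegner point satisfies $\widehat{h}(y_K)=c\cdot L'(E/K,1)$ for an explicit nonzero constant $c$ built from periods, $\operatorname{disc}K$ and local factors, which together with the previous step forces $y_K$ to have infinite order. The second is \emph{Kolyvagin's Euler system argument}: from $y_K$ and its norm-compatible companions over the ring class fields $K[n]$ of squarefree conductor $n$ prime to $N\operatorname{disc}K$ one manufactures Galois cohomology classes bounding the relevant Selmer groups, and the outcome is $\dim_{\mathbb{Q}}\bigl(E(K)\otimes\mathbb{Q}\bigr)=1$ (and, with more work, finiteness of the Tate--Shafarevich group of $E/K$) whenever $y_K$ is non-torsion. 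Finally I would decompose $E(K)\otimes\mathbb{Q}=\bigl(E(\mathbb{Q})\otimes\mathbb{Q}\bigr)\oplus\bigl(E^K(\mathbb{Q})\otimes\mathbb{Q}\bigr)$ under $\operatorname{Gal}(K/\mathbb{Q})$, so that $\operatorname{rk}(E)+\operatorname{rk}(E^K)=1$; the sign of the functional equation places the infinite-order Heegner class in the $E$-summand when $\operatorname{rk}_{an}(E)=1$ and in the $E^K$-summand when $\operatorname{rk}_{an}(E)=0$, which yields $\operatorname{rk}(E)=1$ and $\operatorname{rk}(E)=0$ respectively.

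The hard part --- the one with no short proof --- is the Gross--Zagier formula itself: it is obtained by computing the global height pairing $\langle y_K,y_K\rangle$ as a sum of an archimedean term (Green's functions on the Riemann surface $X_0(N)$) and non-archimedean terms (intersection theory on a regular model over $\mathbb{Z}$), and then matching this expansion term by term with the derivative at $s=1$ of a Rankin--Selberg convolution $L$-function; Kolyvagin's descent is likewise far from elementary. This is precisely why the statement is quoted here as a black box. For the purposes of this paper, the only thing that remains is the finite computation of enough coefficients $N_p$ of $\mathcal{E}$ (equivalently, enough Fourier coefficients of the associated newform) to determine that $\operatorname{rk}_{an}(\mathcal{E})\in\{0,1\}$ and thereby read off $\operatorname{rk}(\mathcal{E})$.
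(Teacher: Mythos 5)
Your sketch is an accurate outline of the Gross--Zagier/Kolyvagin mechanism, which is exactly what the paper relies on: the theorem is quoted there as a black box with references to \cite{GZ,Ko1,Ko2}, and no proof is given in the text. So you take essentially the same route as the paper, and your summary of the ingredients --- modularity, choice of an imaginary quadratic field with a nonvanishing quadratic twist, the Gross--Zagier height formula for the Heegner point, Kolyvagin's Euler system bound on the Selmer group, and the decomposition of $E(K)\otimes\mathbb{Q}$ under $\operatorname{Gal}(K/\mathbb{Q})$ --- is correct.
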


	\begin{theo}
		\label{pr:rk}
		The rank of the curve~$\mathcal{E}(\mathbb{Q})$ equals~$1$.
	\end{theo}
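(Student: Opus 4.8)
The plan is to sandwich the rank between $1$ and $1$. Proposition~\ref{ordA=inf simple} already gives $\operatorname{rk}(\mathcal{E})\geqslant 1$, since the point $A=(1:0:-1)$ has infinite order; so everything reduces to proving $\operatorname{rk}(\mathcal{E})\leqslant 1$, and for this it is enough to show that the analytic rank $\operatorname{rk}_{an}(\mathcal{E})$ equals $1$ and then invoke the Gross--Zagier--Kolyvagin theorem quoted above.

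The key observation is that the lower bound makes one half of this almost automatic: if $L(\mathcal{E},1)\neq 0$ then $\operatorname{rk}_{an}(\mathcal{E})=0$, and the cited theorem would force $\operatorname{rk}(\mathcal{E})=0$, contradicting the existence of $A$. Hence $L(\mathcal{E},1)=0$ and $\operatorname{rk}_{an}(\mathcal{E})\geqslant 1$. It remains to rule out $\operatorname{rk}_{an}(\mathcal{E})\geqslant 2$, equivalently to check that $L'(\mathcal{E},1)\neq 0$. First I would pass to the minimal Weierstrass model and read off the arithmetic data: analysing the reduction at the primes $2$, $3$, $17$ dividing $\Delta=2^{14}\cdot 3^2\cdot 17$ one finds that $\mathcal{E}$ is semistable (so the conductor $N$ is the radical of the minimal discriminant, $N=2\cdot 3\cdot 17=102$), and one computes the local root numbers, hence the sign $w$ of the functional equation, which is forced to be $-1$ in agreement with $L(\mathcal{E},1)=0$. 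With $N$ and $w$ known, $L'(\mathcal{E},1)$ is evaluated from the coefficients $a_p=p+1-N_p$ through the rapidly convergent series for the derivative at the central point; the Hasse bound $|a_p|\leqslant 2\sqrt{p}$ turns the truncation error into an explicit quantity, so the computed value can be certified to be nonzero. This gives $\operatorname{rk}_{an}(\mathcal{E})=1$ and hence $\operatorname{rk}(\mathcal{E})=1$.

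The main obstacle is precisely this last certification: converting the numerical evaluation of $L'(\mathcal{E},1)$ into a rigorous nonvanishing statement (equivalently, rigorously excluding $\operatorname{rk}_{an}(\mathcal{E})\geqslant 3$) requires controlling both the tail of the series and the rounding with explicit bounds. A conceptually cleaner substitute avoids the numerics: by the Gross--Zagier formula, $L'(\mathcal{E},1)$ equals, up to explicit nonzero factors, the canonical height of a Heegner point on $\mathcal{E}$, so it suffices to produce one Heegner point of infinite order; identifying it with a multiple of the explicit point $A$ then yields $L'(\mathcal{E},1)\neq 0$ directly.

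Finally, I would note that the theorem also admits an entirely elementary proof by descent along the $2$-isogeny $\phi\colon\mathcal{E}\to\mathcal{E}'$ with $\mathcal{E}'\colon y^2=x^3-10x^2+153x$, using no analytic input at all. Since $\mathcal{E}$ has a single nontrivial rational $2$-torsion point (Proposition~\ref{pr:tor}), the descent formula reads $2^{\operatorname{rk}(\mathcal{E})}=\tfrac14\,\lvert\alpha(\mathcal{E}(\mathbb{Q}))\rvert\cdot\lvert\alpha'(\mathcal{E}'(\mathbb{Q}))\rvert$, where $\alpha,\alpha'$ are the descent maps into $\mathbb{Q}^\times/(\mathbb{Q}^\times)^2$. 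On $\mathcal{E}$ the points $(0,0)$ and $(-1,6)$ have images $-2$ and $-1$, so $\alpha(\mathcal{E}(\mathbb{Q}))=\langle -1,2\rangle$ has order $4$. On $\mathcal{E}'$ the image contains $1$ and $\alpha'(0,0)\equiv 17$, and is contained in $\langle -1,3,17\rangle$; of the remaining six classes, the four classes $-1,-3,-17,-51$ are killed by a real-place obstruction (the corresponding quartic $N^2=d M^4-10M^2e^2+(153/d)e^4$ has all three coefficients negative) and the two classes $3,51$ by a $3$-adic one (the equation has no primitive solution modulo $9$). Hence $\lvert\alpha'(\mathcal{E}'(\mathbb{Q}))\rvert=2$, and $2^{\operatorname{rk}(\mathcal{E})}=\tfrac14\cdot 4\cdot 2=2$, so $\operatorname{rk}(\mathcal{E})=1$.
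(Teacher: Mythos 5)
Your last paragraph is correct and takes a genuinely different route from the paper. The paper also begins on the analytic side (it simply quotes a \texttt{pari/gp} evaluation $L'(\mathcal{E},1)\approx 0.677\neq 0$ together with the Gross--Zagier--Kolyvagin theorem), but its self-contained proof is a full $2$-descent carried out after extending scalars to $K=\mathbb{Q}(\sqrt{17})$, since $\mathcal{E}[2]$ is not rational; this costs a long case analysis in $K(S,2)\times K(S,2)$ involving the unit group of $\mathbb{Q}(\sqrt{17})$, the splitting of $2,3,17$, and several completions. Your descent through the rational $2$-isogeny stays over $\mathbb{Q}$ and is genuinely simpler: the isogenous curve $y^2=x^3-10x^2+153x$ has the right coefficients ($a'=-10$, $b'=a^2-4b=153$); the image of $\alpha$ is a priori contained in the classes of divisors of $-32$, i.e.\ in $\langle-1,2\rangle$, and it contains $\alpha(0,0)=-2$ and $\alpha(-1,6)=-1$ (the point $(-1,6)=3A$ does lie on the curve), so it has order exactly $4$; on $\mathcal{E}'$ the classes $-1,-3,-17,-51$ are excluded at the real place since all three coefficients of $N^2=dM^4-10M^2e^2+(153/d)e^4$ are negative, and $d=3$ is excluded by the congruence argument modulo $9$ (a primitive solution forces $3\mid N$ and then $3\mid M$ and $3\mid e$), with $d=51$ following by the symmetry $M\leftrightarrow e$; hence the image of $\alpha'$ is $\{1,17\}$ and the rank formula gives $2^{\operatorname{rk}}=\tfrac14\cdot4\cdot2=2$. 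This is a complete, elementary proof in which the only local obstructions needed are over $\mathbb{R}$ and $\mathbb{Z}/9\mathbb{Z}$, whereas the paper's descent buys the full determination of the image over $\mathbb{Q}(\sqrt{17})$ at the price of working in a quadratic field.

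Two caveats on your analytic paragraphs, neither of which affects the descent proof. The rigorous numerical certification of $L'(\mathcal{E},1)\neq0$ is precisely what the paper's \texttt{pari/gp} computation amounts to and is achievable with explicit tail bounds, so it is less of an obstacle than you suggest; on the other hand, the proposed Heegner-point shortcut is circular as stated: by Gross--Zagier the Heegner point is non-torsion if and only if $L'(\mathcal{E},1)\neq0$, so knowing that $A$ has infinite order does not by itself identify the Heegner point with a nonzero multiple of $A$~--- one would still have to compute the Heegner point and check it is non-torsion. Likewise the semistability, conductor and root-number claims are asserted rather than derived (the model $y^2=x^3+5x^2-32x$ is non-minimal at $2$, so the reduction type there requires Tate's algorithm).
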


		It can be checked by the {\tt pari/gp} computer algebra system that the Hasse-Weil $L$-function has order~$1$ at~$s=1$.
		It has the form
		\[
			L(E,s) = s\cdot0.67728489801666901020123734615355993155... + O(s^2).
		\]
		By~\ref{pr:rk} it implies that the curve has rank~$1$.
		But this method uses some computer algorithm that is not well known.
		We give below a method to check this in a way that does not use complicated computer algorithms and applies more complicated geometrical methods instead.

	The corresponding method is called {\it $2$-descent} and is related to computation of weak Mordell--Weil groups of elliptic curves.
	We will not give full proofs and details of the underlying constructions (they all can be found in~\cite{Silv}), but briefly recall some basic notation of the method and shortly explain why does it work.

	Take an elliptic curve~$E$ defined over a number field~$K$.
	We don not specify the field here, because we should take some extension of~$\mathbb{Q}$ below.
	Denote
	\begin{itemize}
		\item by $E(K)$ the set of $K$-points on~$E$,
		\item by $mE(K)$ the subgroup of points $[m]A$ for all~$A\in E(K)$,
		\item by $E[m] \simeq \left(\frac{\mathbb{Z}}{m\mathbb{Z}}\right)^2$ the subgroup of points of order~$m$ over the algebraic closure~$\bar{K}$ of~$K$,
		\item by $G_{L/K}$ the Galois group of field extension $K\subset L$.
	\end{itemize}
	The main idea is to find an embedding of the group~$E(K)/mE(K)$ to some group and then find the preimages of elements in~$E(K)$.
	If we know the image of~$E(K)/mE(K)$ and the torsion subgroup of~$E(K)$, then we can calculate the rank of~$E$.

	\begin{defi}
		{\it Kummer pairing}
		\[
			\kappa\colon E(K) \times G_{\bar{K}/K} \to E[m]
		\]
		is defined as follows.
		For a point~$P\in E(K)$ there exists a point~$Q\in E(\bar{K})$ such that~$[m]Q=P$.
		Then put~$\kappa(P,\sigma) = Q^\sigma - Q$.
	\end{defi}

	It can be proved that this pairing is well-defined, bilinear, has the left kernel~$mE(K)$ and the right kernel~$G_{\bar{K}/L}$, where~$L$ is the field of definition of all the points in~$[m]^{-1}E(K)\subset E(\bar{K})$.
	Therefore, it defines the perfect pairing
	\[
		\kappa \colon \left[E(K)/m E(K)\right] \times G_{L/K} \to E[m].
	\]
	We can view it as an injective morphism
	\[
		\delta_E \colon E(K)/mE(K) \to \operatorname{Hom}(G_{L/K},E[m]),\quad \delta_E(P)(\sigma) = \kappa(P,\sigma).
	\]
	There is a so called {\it Weil pairing}
	\[
		e_m\colon E[m] \times E[m] \to \mu_m,
	\]
	where~$\mu_m \subset \bar{K}$ is the group of roots of unity of degree~$m$.
	It is bilinear, alternating, Galois-invariant and nondegenerate.
	In particular it implies that if~$E[m]\subset E(K)$, then~$\mu_m\subset K$.
	See details in~\cite[III.8.1]{Silv}.
	Hereafter we assume that~$E[m]\subset E(K)$.

	\begin{remark}
		Actually, this is the moment that requires to extend the field.
		We will apply the method in the case~$m=2$.
		But~$\mathcal{E}[2]$ is not defined over~$\mathbb{Q}$.
		So we need to extend the field to~$\mathbb{Q}(\sqrt{17})$.
		Consideration of other~$m$ does not help.
	\end{remark}

	Hilbert`s Theorem 90 says that each homomorphism $G_{\bar{K}/K}\to\mu_m$ has the form
	\[
		\sigma \to \frac{\beta^\sigma}{\beta}, \quad \beta\in\bar{K}^\times,\quad \beta^m\in K^\times.
	\]
	This defines the isomorphism
	\[
		\delta_K\colon K^\times / (K^\times)^m \xrightarrow{\sim} \operatorname{Hom}(G_{\bar{K}/K} , \mu^m),
	\]
	where~$\delta_K(b)(\sigma) = \beta^\sigma/\beta$ and $\beta$ is chosen such that~$\beta^m=b$.

	With this notation we can define the pairing
	\[
		b\colon \left[E(K)/mE(K)\right] \times E[m] \to K^\times/(K^\times)^m
	\]
	using the following diagram:
	\[
		\xymatrix{
			E[m] \ar[r] &
			\operatorname{Hom}(G_{\bar{K}/K}, E[m]\times E[m]) \ar[r]^-{e_m} &
			\operatorname{Hom}(G_{\bar{K}/K}, \mu_m) \\
			E(K)/mE(K) \ar@{^{(}->}[r]^-{\delta_E} &
			\operatorname{Hom}(G_{\bar{K}/K}, E[m]) \ar[u] &
			K^\times / (K^\times)^m \ar[u]^{\delta_K}_{\sim}, \\
		}
	\]
	where~$e_m(\delta_e(P),T) = \delta_K(b(P,T))$.
	Let us check that the pairing~$b$ is nondegenerate at~$E(K)/mE(K)$.
	Take $P\ne 0 \mod (mE(K))$ in $E(K)$.
	It is taken by~$\delta_E$ to~$h\colon G_{\bar{K}/K}\to E[m]$.
	Since~$h\ne 0$, there exists~$\sigma\in G_{\bar{K}/K}$ such that~$h(\sigma)\ne 0$.
	Since~$e_m$ is nondegenerate, there exists $T\in E[m]$ such that~$e_m(h(\sigma),T)\ne 1 \in \mu_m$.
	Therefore, $\delta_K^{-1}(e_m(h(\sigma),T))\notin (K^\times)^m$.

	Denote by~$S$ the set of all valuations of~$K$ including all non-archimedian, dividing~$m$ and all bad reductions of~$E$.
	Then we can define the group
	\[
		K(S,m) = \{b\in K^\times/(K^\times)^m: \operatorname{ord}_v(b)\,{\vdots}\, m, v\notin S\}.
	\]
	The very useful fact is that the image of the pairing~$b$ lies in~$K(S,m)$.
	We omit the corresponding proof.
	Since~$E[m]\simeq (\mathbb{Z}/m\mathbb{Z})^2$, we can fix there two generators~$T_1$ and~$T_2$.
	Then we obtain the morphism
	\[
		E(K)/mE(K) \to K(S,m) \times K(S,m)
	\]
	defined as
	\[
		P \mapsto (b(T_1,P), b(T_2,P)).
	\]
	Denote~$f_i(P)=b(T_i,P)$, $i=1,2$.
	Since~$b$ is nondegenerate at~$E(K)/mE(K)$ and~$E[m] = \langle T_1,T_2\rangle$, this map is injective.
	The group~$K(S,m)$ can be easily constructed.

	Given fixed~$(b_1,b_2)\in K(S,m)\times K(S,m)$, we are interested whether there exists a point~$P\in E(K)$ such that
	\[
		b_1z_1^m = f_1(P),\quad b_2z_2^m = f_2(P)
	\]
	for some~$z_1,z_2\in K^\times$.
	If we substitute~$P=(x,y)$ to the Weierstra\ss~equation of~$E$, the problem of calculating~$E(K)/mE(K)$ will be reduced to the problem of existence of a solution of
	\[
		\begin{aligned}
			y^2 + a_1xy + a_3y &=& x^3 + a_2x^2 + a_4x + a_6,\\
			b_1z_1^m = f_1(x,y),& &
			b_2z_2^m = f_2(x,y)
		\end{aligned}
	\]
	for~$(x,y,z_1,z_2)\in K\times K\times K^\times\times K^\times$.
	This method is called {\it $m$-descent}.
	From this moment we put~$m=2$.
	The assumption~$E[2]\subset E(K)$ implies that the Weierstra\ss~equation has the form
	\[
		y^2 = (x-e_1)(x-e_2)(x-e_3)\text{ with } e_i\in K.
	\]
	In these coordinates three nontrivial $2$-torsion points are~$T_i=(e_i,0)$.
	It can be checked that if we put~$T=(e,0)$, than the corresponding function~$f$ is~$f(z)=z-e$.
	At~$z=\infty$ and~$z=e$ it can be calculated by linearity.
	Finally, the map~$(f_1,f_2)$ has the form
	\[
		P=(x,y) \mapsto
		\begin{cases}
			(x-e_1,x-e_2), & x\ne e_1,e_2 \\
			\left(\frac{e_1-e_3}{e_1-e_2},e_1-e_2\right), & x=e_1,\\
			\left(e_2-e_1,\frac{e_2-e_3}{e_2-e_1}\right), & x=e_2,\\
			(1,1),& x=\infty.
		\end{cases}
	\]

	In our case the equation has the form
	\[
		y^2 = x^3 + 5x^2 - 32x =
		x
		\left(x + \frac{5 + 3\sqrt{17}}{2} \right)
		\left(x + \frac{5 - 3\sqrt{17}}{2} \right).
	\]
	So we see that in our case~$K=\mathbb{Q}(\sqrt{17})$.
	A point $(b_1,b_2)\in K(S,2)\times K(S,2)$ is the image of a point
	\[
		P = (x,y) \in E(K)/2E(K)
	\]
	not coinciding with~$O$, $(e_1,0)$, $(e_2,0)$ if and only if the system of equations
	\[
		\begin{aligned}
			b_1z_1^2 - b_2z_2^2 = e_2-e_1, \\
			b_1z_1^2 - b_1b_2z_3^2 = e_3-e_1\\
		\end{aligned}
	\]
	has a solution~$(z_1,z_2,z_3)\in K^\times\times K^\times\times K$.
	(We omit the calculation leading to this form.)
	If such a solution exists, we can take
	\[
		P=(x,y) = (b_1z_1^2+e_1,b_1b_2z_1z_2z_3).
	\]
	We put
	\[
		e_1 = 0,\,
		e_2 = -\frac{5 + 3\sqrt{17}}{2},\,
		e_3 = -\frac{5 - 3\sqrt{17}}{2}.
	\]
	Over~$\mathbb{Q}$ the set~$S$ equals~$\{2,3,17,\infty\}$.
	The group of unities of~$\mathbb{Q}(\sqrt{17})$ is $\langle-1\rangle_2\oplus\langle4+\sqrt{17}\rangle_\infty$.
	Therefore the group of unites modulo squares has the set of representatives $\{\pm1,\pm(4+\sqrt{17})\}$.
	Over~$\mathbb{Q}(\sqrt{17})$ the number~$2$ splits as~$\frac{5+\sqrt{17}}2{} \cdot \frac{5-\sqrt{17}}{2}$, $3$ remains prime,~$17$ is the square of~$\sqrt{17}$.
	Denote $\tilde{\imath} = 4+\sqrt{17}$ and~$2_{\pm} = \frac{5 \pm \sqrt{17}}{2}$.
	Denote $\overline{a+\sqrt{17}b} = a - \sqrt{17}b$ the image of the unique nontrivial automorphism of~$\mathbb{Q}(\sqrt{17})$ over~$\mathbb{Q}$.
	Therefore, we can choose the set of representatives of~$K(S,2)$ each of them is a product of some of numbers in the set
	\[
		\left\{
			-1, \tilde\imath, 2_+, 2_-, 3, \sqrt{17}.
		\right\}.
	\]
	So the set of all pair~$(b_1,b_2)$ in~$K(S,2)$ consists of~$(2^6)^2=4096$ pairs.
	We will see below that the system
	\begin{equation}
		\label{eq:b1b2}
		\begin{aligned}
			b_1z_1^2 - b_2 z_2^2 =& e_2, \\
			b_1z_1^2 - b_1b_2 z_3^2 =& e_3
		\end{aligned}
	\end{equation}
	has a solution~$(z_1,z_2,z_3)\in K^\times\times K^\times\times K$ with~$x,y\in\mathbb{Q}$ only for the values~$b_1$ and $b_2$ listed in the following table
	\begin{center}
		\begin{tabular}{|c|c|c|c|c|} \hline
			$b_1$ & $1$ & $-2$ & $-1$ & $2$ \\ \hline
			$b_2$ & $1$ & $\tilde\imath \cdot 2_-$ & $\tilde\imath \cdot 3$ & $2_- \cdot 3$ \\ \hline
		\end{tabular}
	\end{center}

	The rest of the proof is analogous to the proof in example given in~\cite{Silv}, but it is some bit more complicated due to the difficulties arising from the field extension and prime decomposition in its ring of integral elements.
	Proceeding systematically, we list our results in the following table.
	\begin{center}
		\begin{tabular}{|c|c|c|c|c|c|} \hline
			${}_{b_2} {}^{b_1}$ 
			& $1$ 
			& $-1$, $\pm2$ 
			& $\tilde\imath$, $2_\pm$
			& $3$ 
			& $\sqrt{17}$ \\\hline
			$1$ 
			& $O^{\circled{\tiny 1}}$ 
			& \multirow{6}{*}{$\times ^ {\circled{\tiny 7}}$} 
			& \multirow{7}{*}{$\mathbb{Q} ^ {\circled{\tiny 5}}$}
			& \multirow{7}{*}{$\mathbb{Q}_3(i) ^ {\circled{\tiny 4}}$} 
			& \multirow{7}{*}{$\mathbb{Q}_{17}(\sqrt{17}) ^ {\circled{\tiny 3}}$} \\ \cline{1-2}
			$\tilde\imath$ & $\mathbb{Q}_2 ^ {\circled{\tiny 11}}$ &&&& \\ \cline{1-2}
			$3$ & $\mathbb{Q}_3(i) ^ {\circled{\tiny 10}}$ &&&& \\ \cline{1-2}
			$2_+$ & $\mathbb{Q}(\sqrt{17}) ^ {\circled{\tiny 9}}$ &&&& \\ \cline{1-2}
			$\sqrt{17}^*$ & $\mathbb{R} ^ {\circled{\tiny 8}}$ &&&& \\ \cline{1-3}
			$2_-^*$ & \multicolumn{1}{c}{$\mathbb{Q}_2 ^ {\circled{\tiny 6}}$} &&&& \\ \hline
			$-1$ & \multicolumn{4}{c}{$\mathbb{R} ^ {\circled{\tiny 2}}$} & \\ \hline
		\end{tabular}
	\end{center}

	On the first step we find~$z_1,z_2,z_3$ for $b_1$ and~$b_2$ listed in the table above.
	On other steps we consequently exclude possible factors of~$b_1$ and~$b_2$.
	Also on the fifth step we prove that multiplicities of~$2_-$ and~$2_+$ in~$b_1$ are equal, on seventh step we deduce the cases~$b_1 \in \{\pm 1, \pm 2\}$ to the case~$b_1=1$ using the homomorphism property of~$f$.
	It remains to proceed this sequence step by step.

	\begin{itemize}
		\item[\circled{1}]
			Let us find the solution for the cases listed above and the corresponding points on the elliptic curve.
			\begin{enumerate}
				\item 
					Take $(b_1,b_2)=(1,1)$.
					Note that~$\overline{e_2} = e_3$.
					Therefore, if we assume~$z_1\in\mathbb{Q}$ and find~$z_1,z_2$ such that~$z_1^2-z_2^2 = e_2$, then it is enough to take~$z_3=\overline{z_2}$.
					We can take~$z_1=2$, $z_2=\frac{3+\sqrt{17}}{2}$.
					This choice corresponds to the point~$P=(4,4) = 2A$.
				\item
					Take~$(b_1,b_2)=(-2,\tilde\imath\cdot 2_-)$.
					We can take~$z_1=2$, $z_2=\frac{5-\sqrt{17}}{2}$, $z_3=-\frac{1+\sqrt{17}}{4}$ as a solution of
					\[
						\begin{cases}
							-2z_1^2 - \frac{3+\sqrt{17}}{2}z_2^2 = -\frac{5+3\sqrt{17}}{2}, \\
							-2z_1^2 + (3+\sqrt{17})z_3^2 = -\frac{5-3\sqrt{17}}{2}. \\
						\end{cases}
					\]
					This gives the point~$P=(-8,8)=2A+D$.
				\item
					Take~$(b_1,b_2)=(-1,\tilde\imath\cdot 3)$.
					We can take~$z_1=1$, $z_2=\frac{-3+\sqrt{17}}{2}$, $z_3=\frac{5-\sqrt{17}}{4}$ as a solution of
					\[
						\begin{cases}
							-z_1^2 - 3(4 + \sqrt{17})z_2^2 = -\frac{5+3\sqrt{17}}{2}, \\
							-z_1^2 + 3(4+\sqrt{17})z_3^2 = -\frac{5-3\sqrt{17}}{2}. \\
						\end{cases}
					\]
					This gives the point~$P=(-1,6)=3A$.
				\item
					Take~$(b_1,b_2)=(2,2_-\cdot 3)$.
					We can take~$z_1=2$, $z_2=\frac{3¿\sqrt{17}}{2}$, $z_3=-\frac{1+\sqrt{17}}{4}$ as a solution of
					\[
						\begin{cases}
							2z_1^2 - 3\frac{5+\sqrt{17}}{2}z_2^2 = -\frac{5+3\sqrt{17}}{2}, \\
							2z_1^2 + 3(5+\sqrt{17})z_3^2 = -\frac{5-3\sqrt{17}}{2}. \\
						\end{cases}
					\]
					This gives the point~$P=(8,-24)=A+D$.
			\end{enumerate}
		\item[\circled{2}]
			Either~$b_1>0$, or~$b_1<0$:
			\begin{enumerate}
				\item if $b_1 > 0$ and $b_2 < 0$, then the equation $b_1z_1^2 - b_2z_2^2 = e_2 < 0$ has no solution over~$\mathbb{R}$;
				\item if $b_1 < 0$ and $b_2 < 0$, then the equation $b_1z_1^2 - b_1b_2z_3^2 = e_3 > 0$ has no solution over~$\mathbb{R}$.
			\end{enumerate}
			This excludes the factor~$-1$ from~$b_2$, because $\tilde\imath,2_{\pm},3,\sqrt{17}>0$.
		\item[\circled{3}]
			Consider the ramified field extension~$\mathbb{Q}_{17} \subset \mathbb{Q}_{17}(\sqrt{17})$.
			The valuation~$\operatorname{ord}_{17}$ can be continued to the extension and has there half-integer values.
			Here we exclude the factor~$\sqrt{17}$ from~$b_1$ (all others has zero~$\operatorname{ord}_{17}$).
			Assume that~$\operatorname{ord}_{17}b_1=1/2$,\,i.\,e. the factor~$\sqrt{17}$ does contribute to~$b_1$.
			Either~$\operatorname{ord}_{17}b_2=0$, or~$\operatorname{ord}_{17}b_2=1/2$:
			\begin{enumerate}
				\item
					Suppose~$b_1 \operatorname{\vdots} \sqrt{17}$ and $b_2 \operatorname{\not\vdots} \sqrt{17}$.
					The expression~$b_1z_1^2 - b_2z_2^2 = e_2$ has $17$-order~$0$, because~$\operatorname{ord}_{17}\left(-\frac{5+3\sqrt{17}}{2}\right)=0$.
					Since~$\operatorname{ord}_{17}z_i \in \frac{1}{2}\mathbb{Z}$, we have~$\operatorname{ord}_{17}z_i^2\in\mathbb{Z}$, $i=1,2$.
					Therefore $b_1z_1^2$ and~$b_2z_2^2$ has different $17$-orders.
					Their sum has order~$0$.
					Therefore~$z_1$ and~$z_2$ are integral in~$\mathbb{Q}_{17}(\sqrt{17})$.
					Consider the second equation:
					\[
						b_1z_1^2 - b_1b_2z_3^2 = \tilde\imath^{-3} 2_+^5=e_3.
					\]
					Right hand side has $17$-order $0$.
					In the left hand side the first summand has order~$1/2$ and the second summand has half-integer $17$-order.
					Either the last is positive or negative, anyway the equation has no solutions.
				\item
					Suppose~$b_1,b_2 \operatorname{\not\vdots} \sqrt{17}$.
					From
					\[
						b_1z_1^2 - b_1b_2z_3^2 = e_3 = \tilde\imath^{-3} \cdot 2_+^5
					\]
					in the same way it follows that $z_1$ and~$\sqrt{17}z_3$ are integral in~$\mathbb{Q}_{17}(\sqrt{17})$.
					In the equation
					\[
						b_1z_1^2 - b_2z_2^2 = -\tilde\imath^3\cdot 2_-^5
					\]
					the right hand side has order~$0$, in the left hand side the first summand has positive order and the second summand has half-integer order.
					Therefore there are no solutions.
			\end{enumerate}
			Therefore~$\sqrt{17}$ does not contribute to~$b_1$.
		\item[\circled{4}]
			Consider the unramified field extension~$\mathbb{Q}_3 \subset \mathbb{Q}_3(i)$.
			Since~$x^2=17$ has a root in the residue field~$\mathbb{F}_3(i)=\mathbb{F}_9$, by Hensel's lemma it has a root in~$\mathbb{Q}_3(i)$.
			The valuation~$\operatorname{ord}_3$ has integer values on the extension.
			The following reasoning literally repeats the previous case, except the half-integer and integer valuations are replaced with odd and even integer valuations.
			Therefore~$3$ does not contributes to~$b_1$.
		\item[\circled{5}]
			Consider the valuations~$\nu_-$ and~$\nu_+$ at~$2_-$ and~$2_+$.
			They both are even for~$z_1^2$.
			For~$b_1$ they have the values in~$\{0,1\}$, and~$b_1z_1^2 \in\mathbb{Q}$,~i.\,e. they coincide.
			In particular, their parities coincide.
			Therefore, they coincide for~$b_1$.

			It is well known that the integral domain of~$\mathbb{Q}(\sqrt{17})$ is Euclidean and therefore an UFD.
			Any element there is a product of some irreducible elements and a unity.
			The element~$b_1$ is a product of some subset in~$\{-1,2,\tilde\imath\}$.
			Therefore, the element~$b_1z_1^2$ is not a square under the projection into the unity subgroup, because~$\tilde\imath$ is not a square there.

			Hence, $\tilde\imath$ does not contribute to~$b_1$ and~$\operatorname{ord}_{2_-}b_1=\operatorname{ord}_{2_+}b_1$.
			Therefore,~$b_1\in\mathbb{Q}$.
		\item[\circled{6}]
			Here we prove that the prime~$2_-$ has the same degree at~$b_1$ and~$b_2$.
			Consider the field~$\mathbb{Q}_2$.
			The root~$\sqrt{17}$ can be extracted there, because a solution of~$x^2=17$ exists over the residue field~$\mathbb{F}_2$ and a solution over~$\mathbb{Q}_2$ exists by Hensel's lemma.
			(We need to note here that the general version of Hensel's Lemma is not applicable here, and we need to use more general one.
			Actually, there are {\it four} solution of~$x^2=17$ modulo~$2^k$ for~$k\geqslant 3$.
			But there are only {\it two}~$2$-adic limits.)
			Therefore,~$\mathbb{Q}(\sqrt{17})\subset\mathbb{Q}_2$.
			Actually,~$\sqrt{17}$ can be written as~
			\[
				\sqrt{17} = ...10011011101001
			\]
			or
			\[
				\sqrt{17} = ...01100100010111.
			\]
			We fix the embedding corresponding to the first choice of~$\sqrt{17}$ and consider the restriction of the valuation~$\operatorname{ord}_2$ to~$\mathbb{Q}(\sqrt{17})$.
			Note that this valuation on~$\mathbb{Q}(\sqrt{17})$ is not Galois-invariant and depends on the embedding.
			Here~$\sqrt{17} = 5+O(8)$.
			Therefore,
			\[
				\frac{5 + \sqrt{17}}{2} = \frac{14 + O(8)}{2} = 3 + O(4),
			\]
			\[
				\frac{5 - \sqrt{17}}{2} = \frac{-2 + O(8)}{2} = 2 + O(4).
			\]
			In other word,~$\operatorname{ord}_22_-=1$ and~$\operatorname{ord}_22_+=0$.
			Also 
			\[
				e_2=-\frac{5+3\sqrt{17}}{2} = -\frac{5+3(105+O(128))}{2} = 32 + O(64),
			\]
			\[
				e_2=-\frac{5-3\sqrt{17}}{2} = -\frac{5-3(105+O(128))}{2} = 27 + O(64).
			\]

			Suppose the degrees of~$2_-$ in~$b_1$ and~$b_2$ differ.
			Then on of the following cases holds:

			\begin{enumerate}
				\item
					Let~$b_1 \operatorname{\vdots} 2_-$ and~$b_2 \operatorname{\not\vdots} 2_-$.
					Since~$\operatorname{ord}_2 b_1z_1^2$ is odd and $\operatorname{ord}_2b_2z^2$ is even, the equation
					\[
						b_1z_1^2 - b_2z_2^2 = e_2 = 2^5 + O(2^6)
					\]
					implies that~$z_1$ and~$z_2$ are integral in~$\mathbb{Q}_2$, and~$\operatorname{ord}_2z_1=2$.
					In the equation
					\[
						b_1z_1^2 - b_1b_2z_3^2 = e_3
					\]
					we have~$\operatorname{ord}_2 b_1z_1^2 = 5$, and~$\operatorname{ord}_2 b_1b_2z_3^2$ is odd, and~$\operatorname{ord}_2 e_3=0$.
					Therefore, there are no solutions.
				\item
					Let~$b_1 \operatorname{\not\vdots} 2_-$ and~$b_2 \operatorname{\vdots} 2_-$.
					Since~$\operatorname{ord}_2 b_1z_1^2$ is even and $\operatorname{ord}_2b_2z^2$ is odd, the equation
					\[
						b_1z_1^2 - b_2z_2^2 = e_2 = 2^5 + O(2^6)
					\]
					implies that~$z_1$ and~$z_2$ are integral in~$\mathbb{Q}_2$, and~$\operatorname{ord}_2z_1\geqslant 3$.
					In the equation
					\[
						b_1z_1^2 - b_1b_2z_3^2 = e_3
					\]
					we have~$\operatorname{ord}_2 b_1z_1^2 \geqslant 6$, and~$\operatorname{ord}_2 b_1b_2z_3^2$ is odd, and~$\operatorname{ord}_2 e_3=0$.
					Therefore, there are no solutions.
			\end{enumerate}
		\item[\circled{7}]
			Recall that correspondence~$E(\mathbb{Q})/2E(\mathbb{Q}) \to K(S,2)\times K(S,2)$ taking~$P$ to~$(b_1(P),b_2(P))$ is a homomorphism.
			We prove below that for~$b_1=1$ system~\eqref{eq:b1b2} has a solution only for~$b_2=1$.
			Therefore, for each~$b_1$ there is at most one~$b_2$ such that system~\eqref{eq:b1b2} has a solution.
			In \circled{1} we have constructed these solutions.
			Hereafter we consider only~$b_1=1$ and the system
			\begin{equation}
				\label{eq:b1=1}
				\begin{cases}
					z_1^2 - b_2 z_2^2 = e_2, \\
					z_1^2 - b_2 z_3^2 = e_3 = \overline{e_2}. \\
				\end{cases}
			\end{equation}
		\item[\circled{8}]
			We have~$x = z_1^2 \in\mathbb{Q}$.
			Therefore~$\overline{b_2z_2^2} = b_2z_3^2$ and~${z_3}/\overline{z_2} = \pm \sqrt{\overline{b_2}/b_2} \in \mathbb{Q}(\sqrt{17})$.
			If~$\overline{b_2}/b_2$ is not a square in~$\mathbb{Q}(\sqrt{17})$, then there are no solutions of~\eqref{eq:b1=1}.
			The expression~$\overline{b_2}/b_2$ is multiplicative in~$b_2$ and is positive for~$b_2=1,3,2_+$ and is negative for~$b_2=\tilde\imath,\sqrt{17}$.
			Since~$\mathbb{Q}(\sqrt{17}) \subset \mathbb{R}$, the degrees of~$\sqrt{17}$ and~$\tilde\imath$ in~$b_2$ coincide.
		\item[\circled{9}]
			Consider again the expression~$B=\overline{b_2}/b_2$ that must be an element of~$\mathbb{Q}{\sqrt{17}}$, if there are a solution of~\eqref{eq:b1=1}.
			Multiplication of~$b_2$ with~$3$ does not change~$B$, multiplication with~$\tilde\imath$ divides~$B$ by~$\tilde\imath^2$.
			Therefore, we need only to consider~$b_2=2_+$ and~$b_2=2_+\sqrt{17}$ to exclude the factor~$2_+$ from~$b_2$.
			Take~$b_2=2_+$.
			One can see that the minimal polynomial of~$B=\sqrt{\overline{b_2}/b_2} = \sqrt{\frac{5-\sqrt{17}}{5+\sqrt{17}}}$ is~$2x^4-21x^2+2$.
			Therefore $[\mathbb{Q}(B):\mathbb{Q}] = 4$, and~$B$ can not lie in any quadratic extension of~$\mathbb{Q}$, in particular,~$\mathbb{Q}(\sqrt{17})$.

			For~$b_2=2_+\sqrt{17}$ we get the minimal polynomial~$2x^2+21x^2+2$.
			In the same way $B\not\in \mathbb{Q}(\sqrt{17})$.
		\item[\circled{10}]
			It remains to consider~$b_2=3^{\varepsilon_1}\tilde\imath^{\varepsilon_2}$, where~$\varepsilon_1,\varepsilon_2=0,1$.
			If a pair~$(z_1,z_2)$ is a solution of~$z_1^2-b_2z_2^2=e_2$, then the triple~$(z_1,z_2,z_3=\tilde\imath^{-\varepsilon_2}\overline{z_2})$ is a solution of~\eqref{eq:b1=1}.
			Therefore $b_2z_2z_3 = 3^{\varepsilon_1}(\tilde\imath\sqrt{17})^{\varepsilon_2}z_2(-\tilde\imath\sqrt{17})^{-\varepsilon_2}\overline{z_2} = (-1)^{\varepsilon_2}3^{\varepsilon_1}z_2\overline{z_2} \in \mathbb{Q}$.
			In the same time~$y=b_2z_1z_2z_3\in\mathbb{Q}$.
			This implies that~$z_1\in\mathbb{Q}$.

			Since~$x^2=17$ has a non-zero root in~$\mathbb{F}_3(i)=\mathbb{F}_9$, we have an embedding~$\mathbb{Q}(\sqrt{17}) \subset \mathbb{Q}_3(i)$.
			The valuation~$\operatorname{ord}_3$ can be extended from~$\mathbb{Q}_3$ to~$\mathbb{Q}_3(i)$ and has there integer values.
			We have~$z_1^2 - 3(\tilde\imath\sqrt{17})^{\varepsilon_2}z_2^2 = e_2$.
			Here~$\operatorname{ord}_3e_2=\operatorname{ord}_3\tilde\imath=\operatorname{ord}_3\sqrt{17}=0$.
			Therefore in the residue field~$\mathbb{F}_9$ we obtain~$z_1^2\equiv e_2$.

			Denote~$\sqrt{17}=i$ in~$\mathbb{F}_9$.
			Then~$e_2=-\frac{5-3\sqrt{17}}{2} = -\frac{5-3i}{2} = 2+O(3)$ in~$\mathbb{Q}_3(i)$.
			As above,~$z_1$ and~$z_2$ are integral in~$\mathbb{Q}_3(i)$.
			Therefore,~$z_1^2 \equiv 2$ in~$\mathbb{F}_9$.
			But it was proved above that~$z_1\in\mathbb{Q}$.
			This means that its square is not~$2$ modulo~$3$.
		\item[\circled{11}]
			It only remains to check the case~$b_2 = \tilde\imath\sqrt{17}$.
			Existence of solution~$(z_1,z_2)\subset K^\times\times K^\times$ is equivalent to existence of a point defined over $\mathbb{Q}(\sqrt{17})$ on the plane conic
			\[
				z_1^2 - \tilde\imath\sqrt{17}z_2^2 = e_2 = -\tilde\imath^32_-^5.
			\]
			At first, we can projectivize this conic multiplying the right hand side with~$\tilde\imath^{-2}2_-^{-4}z_4^2$.
			Therefore, we can consider the equation
			\[
				\tilde\imath z_5^2 + 2_-z_4^2 = \sqrt{17},
			\]
			where~$z_5 = \tilde\imath z_1$.
			As usual, we obtain that~$z_4$ and~$z_5$ are integral in~$\mathbb{Q}_2$ if we take the embedding~$\mathbb{Q}(\sqrt{17})$ mentioned above.
			We have~$\sqrt{17} = 1 + O(8)$ and~$\tilde\imath = 1 + O(8)$.
			Therefore,~$\operatorname{ord}_2z_4 > 0$, and~$\operatorname{ord}_2 2_-z_4^2 \geqslant 3$.

			Consequently, $z_5^2 = \frac{1 + O(8)}{5 + O(8)} = 5 + O(8)$, where~$z_5$ is an integral~$2$-adic numbed.
			Taking it modulo~$8$, we get~$z_1^2 \equiv 5 \mod 8$.
			But~$5$ is not a square in~$\mathbb{Z}/8\mathbb{Z}$.
			Therefore the equation has no solutions.
	\end{itemize}

	\begin{conclusion}
		$\mathcal{E}(\mathbb{Q}) = \mathbb{Z} \oplus\frac{\mathbb{Z}}{2\mathbb{Z}}$.
	\end{conclusion}

\appendix
	\section{Examples}
	The curve~$\mathcal{E}$ can be written in some different coordinates:
	\begin{enumerate}
		\item $y^2+y = x^3+x^2-2x$ (minimal form),
		\item $y^2=x^3+5x^2-32$ (the form we use in $2$-descent algorithm),
		\item $c^3+c^2(a+b)=c(a^2+ab+b^2)+a^3+a^2b+ab^2+b^3$ (initial form).
	\end{enumerate}

	In the following table some points of~$\mathcal{E}(\mathbb{Q})$ are listed:

	{
	\scriptsize
	\begin{tabular}{|c|l|l|l|l|}\hline
		pt & 1 & 2 & 3 \\ \hline\hline
		$O$ & \multicolumn{2}{c|}{$(0:1:0)$} & $(1:-1:0)$ \\ \hline
		$D$ & \multicolumn{2}{c|}{$(0:0:1)$} & $(-1:-1:1)$ \\ \hline
		$A$ & $(-1:-1:1)$ & $(-4:-12:1)$ & $(1:0:-1)$ \\ \hline
		$A+D$ & $(2:-4:1)$ & $(8:-24:1)$ & $(1:-1:1)$ \\ \hline
		$2A$ & $(1:0:1)$ & $(4:4:1)$ & $(0:1:1)$ \\ \hline
		$2A+D$ & $(-2:2:1)$ & $(-8:8:1)$ & $(1:3:-5)$ \\ \hline
		$3A$ & $(-2:7:8)$ & $(-1:6:1)$ & $(1:5:-4)$ \\ \hline
		$3A+D$ & $(8:20:1)$ & $(32:192:1)$ & $(-9:7:5)$ \\ \hline
		$4A$ & $(9:-33:1)$ & $(36:-228:1)$ & $(25:-32:17)$ \\ \hline
		$4A+D$ & $(-6:-16:27)$ & $(-24:-152:27)$ & $(49:11:-39)$ \\ \hline
		$5A$ & $(-245:-14:125)$ & $(-980:-1092:125)$ & $(128:37:-205)$ \\ \hline
		$5A+D$ & $(350:-370:343)$ & $(1400:-1560:343)$ & $(121:-9:119)$ \\ \hline
		$6A$ & $(968:913:512)$ & $(484:1397:64)$ & $(-1369:1425:1424)$ \\ \hline
		$6A+D$ & $(-1408:2736:1331)$ & $(-5632:16256:1331)$ & $(3:4067:-4147)$ \\ \hline
		$7A$ & $(-37:1955:50653)$ & $(-148:15492:50653)$ & $(16245:17536:-16909)$ \\ \hline
		$7A+D$ & $(2738:141932:1)$ & $(10952:1146408:1)$ & $(-48223:47311:1825)$ \\ \hline
		$8A$ & $(392673:-808088:185193)$ & $(1570692:-4894012:185193)$ & $(600608:-622895:600153)$ \\ \hline
		$8A+D$ & $(-539334:-570570:571787)$ & $(-2157336:-6721896:571787)$ & $(1681691:1217:-1650455)$ \\ \hline
		$9A$ & $(-41889394:39480443:20570824)$ & $(-20944697:18535746:2571353)$ & $(7659925:20017089:-34783204)$ \\ \hline
		$9A+D$ & $(58709432:-3376228:59776471)$ & $(234837728:207827904:59776471)$ & $(1481089:18800081:19214131)$ \\ \hline
	\end{tabular}
	}

	Recall that for given point~$Z=(x:y:z)$ the point~$-Z$ is~$(x:z-y:z)$ in the first case and is~$(x:-y:z)$ in the second case, for the point~$Z=(a:b:c)$ in the third case the point~$-Z$ is~$(b:a:c)$.

	Let us find some of integer triangles.
	Minimal among them are the following.

	\begin{itemize}
		\item
		given in the introduction triangle corresponding to~$9A+D$:
		\[
			(83^2\cdot2729, 1217^2, 17\cdot23\cdot157\cdot313).
		\]

		\item 
		the triangle corresponding to~$16A$:
		\begin{multline*}
			(2^5 \cdot 5^2 \cdot 17 \cdot 23 \cdot 137 \cdot 7901 \cdot 943429^2, \\
			29^2 \cdot 37 \cdot 1291 \cdot 3041^2 \cdot 11497^2, \\
			3 \cdot 19 \cdot 83 \cdot 2593 \cdot 14741 \cdot 227257 \cdot 7704617).
		\end{multline*}

		\item
		the triangle corresponding to~$23A+D$:
		\begin{multline*}
			({5}\cdot {17}\cdot {29}\cdot {97}\cdot {17182729}\cdot {32537017}\cdot {254398174040897}\cdot {350987274396527},\\
			{7}\cdot {1093889^2}\cdot {4941193}\cdot {894993889^2}\cdot {331123185233},\\
			{83^2}\cdot {571^2}\cdot {13873}\cdot {337789537}\cdot {16268766383521^2}).
		\end{multline*}

		\item
		the triangle corresponding to~$30A$:
		\begin{multline*}
			\hspace*{-5mm}
			(
			3 \cdot 19 \cdot 83 \cdot 347 \cdot 853^2 \cdot 14741 \cdot 197609 \cdot 1326053 \cdot 9921337^2 \cdot 2774248223 \cdot 16439698126501721^2, \\
			37 \cdot 53 \cdot 113 \cdot 1291 \cdot 6301^2 \cdot 11057 \cdot 70717^2 \cdot 419401^2 \cdot 56702749^2 \cdot 75758233^2 \cdot 58963203163, \\
			2^4 \cdot 5 \cdot 7 \cdot 13 \cdot 281 \cdot 1361 \cdot 4519 \cdot 943429 \cdot 1277496791 \cdot 58636722172129 \times\\\times 434222192069971469300337687991080717947321).
		\end{multline*}

		\item
		the triangle corresponding to $37A+D$:
		\begin{multline*}
			(
			5\cdot 2299159\cdot 138049208211121\cdot 2760426916410799\cdot 728165182513369014929\times\\\times 2457244522753608004147669717\cdot 3646312514774768838959262707271994342627321, \\
			{3^6}\cdot {41}\cdot {43^2}\cdot {59^2}\cdot {71}\cdot {1753^2}\cdot {4271}\cdot {6449^2}\cdot {306193^2}\cdot {258408497^2}\times\\\times {294583400141651^2}\cdot {5917115594031382979839359182507437287191},\\
			7^2\cdot 79\cdot 3529\cdot 2812999081^2\cdot 5544800297^2\cdot 16078869119\times\\\times 13860847191174419174377^2\cdot 306179686612030303942777
			).
		\end{multline*}

		\item
		the following triangles correspond to $44A$, $51A+D$, $58A$, $65A+D$, $72A$, $79A+D$, $86A$, $93A+D$, $100A$, $107A+D$, $114A$, $121A+D$, $132A$, ...
	\end{itemize}

\end{document}